\let\@fnsymbol\@arabic \makeatother 
\newtheorem{theorem}{Theorem}[section]
\newtheorem{lemma}[theorem]{Lemma}
\newtheorem{corollary}[theorem]{Corollary}
\newtheorem{question}[theorem]{Question}
\newtheorem{claim}[theorem]{Claim}
\newtheorem{proposition}[theorem]{Proposition}
\theoremstyle{remark}
\newtheorem*{example}{Example}
\newtheorem{remark}[theorem]{Remark}
\let\save@mathaccent\mathaccent
\newcommand*\if@single[3]{
	\setbox0\hbox{${\mathaccent"0362{#1}}^H$}%
	\setbox2\hbox{${\mathaccent"0362{\kern0pt#1}}^H$}%
	\ifdim\ht0=\ht2 #3\else #2\fi }
\newcommand*\rel@kern[1]{\kern#1\dimexpr\macc@kerna}
\newcommand*\widebar[1]{\@ifnextchar^{{\wide@bar{#1}{0}}}{\wide@bar{#1}{1}}}
\newcommand*\wide@bar[2]{\if@single{#1}{\wide@bar@{#1}{#2}{1}}{\wide@bar@{#1}{#2}{2}}}
\newcommand*\wide@bar@[3]{
	\begingroup
	\def\mathaccent##1##2{
		\let\mathaccent\save@mathaccent
		\if#32 \let\macc@nucleus\first@char \fi
		\setbox\z@\hbox{$\macc@style{\macc@nucleus}_{}$}
		\setbox\tw@\hbox{$\macc@style{\macc@nucleus}{}_{}$}
		\dimen@\wd\tw@ \advance\dimen@-\wd\z@ \divide\dimen@ 3 \@tempdima\wd\tw@ \advance\@tempdima-\scriptspace \divide\@tempdima 10 \advance\dimen@-\@tempdima \ifdim\dimen@>\z@ \dimen@0pt \fi \rel@kern{0.6}\kern-\dimen@
		\if#31 \overline{\rel@kern{-0.6}\kern\dimen@\macc@nucleus\rel@kern{0.4}\kern\dimen@} \advance\dimen@0.4\dimexpr\macc@kerna \let\final@kern#2 \ifdim\dimen@<\z@ \let\final@kern1 \fi
		\if \final@kern1 \kern-\dimen@ \fi
		\else \overline{\rel@kern{-0.6}\kern\dimen@#1} \fi }
	\macc@depth\@ne	\let\math@bgroup\@empty \let\math@egroup\macc@set@skewchar 	\mathsurround\z@ \frozen@everymath{\mathgroup\macc@group\relax} 	 \macc@set@skewchar\relax \let\mathaccentV\macc@nested@a	\if#31 \macc@nested@a\relax111{#1} \else \def\gobble@till@marker##1\endmarker{} \futurelet\first@char\gobble@till@marker#1\endmarker \ifcat\noexpand\first@char A\else \def\first@char{} \fi \macc@nested@a\relax111{\first@char} \fi
	\endgroup }
\newcommand{\miniminus}{\text{-}}
\newcommand{\miniplus}{\text{+}}
\newcommand*\bell{\ensuremath{\boldsymbol\ell}}
\newcommand{\be}{\mathbf{e}}
\newcommand{\bb}{\mathbf{b}}
\newcommand{\bp}{\mathbf{p}}
\newcommand{\bc}{\mathbf{c}}
\newcommand{\bx}{\mathbf{x}}
\newcommand{\by}{\mathbf{y}}
\newcommand{\bz}{\mathbf{z}}
\newcommand{\bu}{\mathbf{u}}
\newcommand{\bv}{\mathbf{v}}
\newcommand{\bw}{\mathbf{w}}
\newcommand{\bH}{\mathbf{H}}
\newcommand{\bP}{\mathbf{P}}
\newcommand{\bC}{\mathbf{C}}
\DeclareMathAlphabet{\mathpzc}{OT1}{pzc}{m}{it}
\newcommand{\fp}[1][]{ \ifthenelse{\isempty{#1}}{\mathpzc{p}}{\mathpzc{p}(#1)} }
\newcommand{\mP}{\mathcal{P}}
\newcommand{\mI}{\mathcal{I}}
\newcommand{\NN}{\mathbb{N}}
\newcommand{\ZZ}{\mathbb{Z}}
\newcommand{\ceil}[1]{\lceil {#1} \rceil}
\definecolor{darkred}{cmyk}{.2,.9,.8,0}
\title{Another Note on Intervals in the Hales--Jewett Theorem}
\author{
	Nina Kam\v{c}ev\thanks{School of Mathematical Sciences, Monash University, VIC 3800 Australia. Email: {\tt nina.kamcev@monash.edu}. Visit to Universitat Polit\`ecnica de Catalunya supported by the grant TM2017-82166-P9.} \and
	Christoph Spiegel\thanks{Universitat Polit\`ecnica de Catalunya and Barcelona Graduate School of Mathematics, Department of Mathematics, Edificio Omega, 08034 Barcelona, Spain. E-mail: {\tt christoph.spiegel@upc.edu}. Supported by the Spanish Ministerio de Econom\'{i}a y Competitividad FPI grant under the project MTM2014-54745-P and the Mar\'ia de Maetzu research grant MDM-2014-0445.}
}
\begin{document}
\maketitle

\begin{abstract} 
	The Hales--Jewett Theorem states that any $r$--colouring of $[m]^n$ contains a monochromatic combinatorial line if $n$ is large enough. Shelah's proof of the theorem implies that for $m = 3$ there always exists a monochromatic combinatorial lines whose set of active coordinates is the union of \emph{at most} $r$ intervals. Conlon and Kam\v{c}ev proved the existence of colourings for which it cannot be \emph{fewer} than $r$ intervals if $r$ is odd. For $r = 2$ however, Leader and R\"aty showed that one can always find a monochromatic combinatorial line whose active coordinate set is a single interval. In this paper, we extend the result of Leader and R\"aty to the case of all even $r$, showing that one can always find a monochromatic combinatorial line in $[3]^n$ whose set of active coordinate is the union of at most $r-1$ intervals.
\end{abstract}

\section{Introduction}

Given natural numbers $m$ and $n$, let $[m]^n$ be the collection of all \emph{words} of length $n$ with letters taken from the alphabet $[m] = \{1,\dots,m\}$. We write $[m]^n_{\star} = \big( [m]\cup\{\star\} \big)^n \setminus [m]^n$ and refer to the coordinates in $\bell \in [m]^n_{\star}$ where the symbol $\star$ occurs as \emph{active}. Let $\bell[\alpha]$ denote the word in $[m]^n$ obtained by substituting each occurrence of the symbol $\star$ in $\bell$ by $\alpha \in [m]$ and note that $\bell$ must contain at least one active coordinate. The set of $m$ words $\{\bell[1], \dots, \bell[m] \} \subset [m]^n$ is referred to as a \emph{combinatorial line} in $[m]^n$. We will usually simply refer to $\bell$ as that combinatorial line.

\begin{theorem}[Hales--Jewett~\cite{HJ63}]
	For any natural numbers $m$ and $r$ there exists a number $n$ such that any $r$--colouring of $[m]^n$ contains a monochromatic combinatorial line.
\end{theorem}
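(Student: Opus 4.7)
The plan is to prove the Hales--Jewett theorem by induction on the alphabet size $m$, following Shelah's approach. The base case $m = 1$ is immediate: the word $\bell = (\star, \ldots, \star) \in [1]^n_{\star}$ is itself a combinatorial line whose unique substitution $\bell[1]$ is trivially monochromatic under any colouring.

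For the inductive step, fix $m \geq 2$ and $r$, and assume the statement holds for alphabet size $m - 1$ with any number of colours. I would choose $n$ of the form $n = N_1 + N_2 + \cdots + N_k$, with $k$ chosen large in terms of $r$, and with block sizes $N_1 \gg N_2 \gg \cdots \gg N_k$ selected from right to left, each $N_i$ big enough that the inductive hypothesis can be applied to the $i$-th block given all of the choices already made for the blocks to its right. Partition the $n$ coordinates into consecutive blocks $B_1, \ldots, B_k$ of these sizes.

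The heart of the argument is a \emph{focusing} construction. Iteratively, from block $B_k$ down to block $B_1$, one uses the inductive hypothesis applied to the restriction of the colouring (with all coordinates outside $B_i$ held fixed by the previously made choices) to find a combinatorial line $\bell_i$ whose active coordinates lie inside $B_i$ and which uses only the two letters $m - 1$ and $m$ inside $B_i$. The choices are made so that the endpoints $\bell_1[m], \ldots, \bell_k[m]$ all coincide at a common \emph{focus} $\bc \in [m]^n$. Taking $k = r + 1$ then suffices: by the pigeonhole principle applied to the colours $c(\bell_1[m-1]), \ldots, c(\bell_k[m-1]), c(\bc)$, either some $c(\bell_i[m-1])$ equals $c(\bc)$, in which case $\bell_i$ is itself monochromatic, or two of the $c(\bell_i[m-1])$ agree, in which case a standard combining step (simultaneously identifying the active coordinates of the two lines) produces a monochromatic combinatorial line.

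The main obstacle is the delicate bookkeeping required to make the focusing step go through: each $N_i$ must be chosen in terms of a Hales--Jewett number at alphabet size $m - 1$ applied to a colour count determined by the structure of the lines already constructed in $B_{i+1}, \ldots, B_k$, and this must be compatible with the iterated pigeonhole. This is the source of the notoriously large (though primitive recursive, in Shelah's version) bounds on $n$; any more quantitative analysis is out of scope of the present paper, since here we only need existence of some such $n$.
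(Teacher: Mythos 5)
Your sketch is not the paper's proof to compare against (the paper only cites Hales--Jewett/Shelah), so I judge it on its own terms, and as written the final dichotomy does not go through. For $m \geq 3$, knowing $c(\bell_i[m-1]) = c(\bc) = c(\bell_i[m])$ only gives two points of the line $\bell_i$ with equal colour; for $\bell_i$ to be monochromatic you need $c(\bell_i[1]) = \dots = c(\bell_i[m-1])$ as well, and nothing in your construction provides this. Indeed, your requirement that $\bell_i$ ``uses only the two letters $m-1$ and $m$ inside $B_i$'' is a feature of Shelah's argument, where such two-letter words of a special switch form are found by a pigeonhole on induced colourings (not by the inductive hypothesis, which Shelah applies only once at the very end, to an induced colouring of $[m-1]^{H\!J(m-1,r)}$ with the same $r$ colours); the focusing argument you are actually running needs each auxiliary line to be a line over the full alphabet $[m-1]$ that is monochromatic on all $m-1$ non-focus points. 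Moreover, you cannot obtain even that by applying the inductive hypothesis to the colouring with ``all coordinates outside $B_i$ held fixed by the previously made choices'': the monochromaticity so obtained is valid only for that particular completion and is lost the moment the other blocks are moved to the $m$-substitution values defining the focus $\bc$. The standard fix is to colour each point of block $B_i$ by the entire function recording the colours of all completions in the remaining blocks, i.e.\ to apply $H\!J(m-1,\cdot)$ with roughly $r^{m^{N_{i+1}+\dots+N_k}}$ colours; this is exactly what dictates the order and magnitude of the block sizes, and it is absent from your bookkeeping.

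The combining step is also broken as described. If $\bell_i$ and $\bell_j$ have disjoint supports in $B_i$ and $B_j$ and are focused at $\bc$, the line obtained by ``simultaneously identifying the active coordinates of the two lines'' has, for every letter $\alpha \neq m$, both blocks displaced from the focus at once, and the construction gives no information about the colours of such points; $c(\bell_i[m-1]) = c(\bell_j[m-1])$ does not help. In the correct colour-focusing proof the focused lines are nested (the $i$-th line is active simultaneously in blocks $B_1,\dots,B_i$, with the later blocks at the focus values), their monochromaticity off the focus follows from the prefix-insensitivity supplied by the product-colouring applications of the inductive hypothesis, and when two line colours coincide the monochromatic line is the one active exactly on the blocks where the two nested lines differ, with the earlier blocks frozen at a letter of $[m-1]$; with $k = r$ blocks the pigeonhole then closes. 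With those repairs your argument becomes the classical focusing proof rather than Shelah's; as written it conflates the two, and neither branch of the final pigeonhole is justified.
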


The smallest such $n$ is called the \emph{Hales--Jewett number} and denoted $H\!J(m,r)$. Shelah’s celebrated proof of the theorem~\cite{Sh88} uses a single induction (on $m$) and therefore gives primitive recursive bounds for the $H\!J(m,r)$. Moreover, it yields monochromatic combinatorial lines with a specific structure, which has drawn attention of several researchers. The following definitions help us discuss these results: we refer to a combinatorial line as \emph{$q$--fold} if the set of its active coordinates consists of at most $q$ sub-intervals of $\{1,\dots,n\}$ for some positive integer $q$. Let $\mI (m, r)$ be the minimum $q$ so that for sufficiently large $n$ any $r$--colouring of $[m]^n$ contains a $q$--fold combinatorial line. Shelah's argument implies that $\mI(m, r) \leq H\!J(m-1, r)$. Conlon and the first author~\cite{CK18} showed that the bound is sharp for $m=3$ and odd $r$, i.e.~$\mI(3, r) \geq r = H\!J(2, r)$. This gives Shelah's approach a certain additional weight.

Perhaps surprisingly, Leader and R\"aty~\cite{LR18} showed that the restriction on the parity of $r$ is necessary by proving that $\mI(3,2) = 1 < H\!J(2,2)$. Our goal is to show that this case is not an exception by extending their result to any even number of colours.
\begin{theorem} \label{thm:main}
	For any even $r \geq 2$ there exists $N = N(r)$ such that any $r$--colouring of $[3]^n$ contains a monochromatic combinatorial line whose set of active coordinates is contained in at most $r-1$ intervals for $n \geq N$.	
\end{theorem}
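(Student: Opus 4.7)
The plan is to mirror the structure of Shelah's proof of the Hales--Jewett theorem while extracting an additional saving from the parity of $r$, in the spirit of Leader and R\"aty~\cite{LR18}. Concretely, I would first choose $n$ large enough to partition $\{1,\dots,n\}$ into $r$ consecutive blocks $B_1,\dots,B_r$ whose lengths are picked recursively from (smaller instances of) Hales--Jewett, so that the ``third letter'' $\bell[3]$ of any line we eventually build is automatically controlled. A Shelah-style focusing argument then produces a chain of $r+1$ words $w_0,w_1,\dots,w_r\in[3]^n$ with the property that, for every $0\le i<j\le r$, the pair $(w_i,w_j)$ extends to a combinatorial line whose active coordinates lie inside $B_{i+1}\cup\cdots\cup B_j$, hence in at most $j-i$ intervals, and whose third point $\bell[3]$ has colour equal to that of $w_i$ whenever $\chi(w_i)=\chi(w_j)$.

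If any matching colour appears at positions $i,j$ with $j-i\le r-1$, we obtain a monochromatic line in at most $r-1$ intervals and are done. The only obstruction is the case in which $\chi(w_0),\dots,\chi(w_{r-1})$ form a permutation of all $r$ colours and $\chi(w_r)=\chi(w_0)$, precisely the configuration that the Conlon--Kam\v{c}ev~\cite{CK18} construction shows is unavoidable for odd $r$. The whole task is to rule this case out when $r$ is even.

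To defeat this ``permutation case'', I would enrich the chain inside each block by exploiting that $[3]^{|B_i|}$ carries two independent axes of variation (the $1\leftrightarrow 2$ and the $2\leftrightarrow 3$ shifts), and apply the Leader--R\"aty parity argument at the block level. Each block then contributes a \emph{pair} of colour data rather than a single colour, and the global information along the chain is summarised as a walk of length $r$ on an auxiliary graph whose vertices encode these pairs. A parity / cycle-length argument, which is the whole reason that $r$ must be even, should then force the walk to close up strictly earlier than step $r$, producing indices $i<j$ with $\chi(w_i)=\chi(w_j)$ and $j-i\le r-1$ as required.

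The main obstacle I foresee is the merging step: one has to ensure that the nested lines used to produce the block-level data stay inside a \emph{single} interval of the ambient word, so that after combining the block-level analysis with the cross-block line the active coordinate set still sits in at most $r-1$ intervals. Controlling this seems to require choosing the blocks $B_i$ themselves recursively, each large enough to invoke the statement at a smaller even parameter, and then threading the inductive bookkeeping so that the block size depends on $N(r')$ for some $r'<r$. Getting these inductive parameters to close---and verifying that the required parity invariant survives the blow-up---is where I expect the bulk of the technical work to lie.
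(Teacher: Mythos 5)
Your first step (a Shelah-style chain $w_0,\dots,w_r$ which yields a monochromatic line in at most $r-1$ intervals unless the colours $\chi(w_0),\dots,\chi(w_{r-1})$ are pairwise distinct and $\chi(w_r)=\chi(w_0)$) only reproduces the known bound $\mI(3,r)\le r$; the entire content of the theorem is the elimination of this ``permutation case'' when $r$ is even, and that is precisely the part of your proposal that is not an argument. You posit an auxiliary graph of block-level ``colour pairs'' and assert that a parity/cycle-length argument ``should then force'' an earlier colour repetition, but you never define the auxiliary graph, the invariant carried along the walk, or the mechanism through which evenness of $r$ enters. There is also a structural problem with arguing chain-by-chain: a single chain failing does not constrain the colouring globally, since the adversary may behave differently on different chains; one needs a statement about the colouring as a whole, not about one sequence $w_0,\dots,w_r$. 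Finally, your block-level data is not well-defined as described: the colours of the words you would compare inside a block depend on the exact positions of the breakpoints, not merely on block membership, so without a preliminary regularisation step the ``pair of colour data per block'' has no canonical meaning.

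For comparison, the paper resolves exactly these issues by a chain of reductions rather than a focusing chain. First, a Ramsey-type argument (Proposition~\ref{prop:HtoP}, Corollary~\ref{cor:HtoP}) passes to a sub-copy of $[3]^n$ on which the colour of a word depends only on its contraction; this is what makes any block/parity bookkeeping well-defined, and it is already the key device in Leader--R\"aty for $r=2$. Second, on a family of ``buffered'' patterns one shows (via $\alpha$--insertions and $\alpha$--alterations, Lemmas~\ref{lemma:alterationsinsertions} and~\ref{lemma:countreduction}) that a proper $r$--colouring must depend only on the letter counts modulo $r$, producing a translation-invariant hypergraph on $\ZZ_{r}^3$ with a Latin-cube structure (Lemma~\ref{lemma:hyperedges_C}, Corollary~\ref{cor:edges_C}). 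Third, an inductive argument (Lemma~\ref{lemma:ind}) shows that in any proper $r$--colouring of this hypergraph a colour class is forced to be the zero set of a linear functional, and a gcd computation (Proposition~\ref{prop:main}) shows this is impossible when $r$ is even. Nothing in your sketch plays the role of the contraction reduction, the reduction to counts modulo $r$, or this final linear-algebraic parity step; as it stands the proof does not close. If you wish to pursue your route, the minimum missing ingredients are (a) a precise, provably colour-determined block invariant (which in effect requires the contraction reduction), and (b) an actual proof that this invariant forces an early colour repetition exactly when $r$ is even.
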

More concisely, $\mI (3, r) = r-1$ for even $r$. The fact that $\mI(3, r)$ depends on the parity of $r$ could also come as a surprise seeing as the Hales--Jewett Theorem is purely combinatorial. Several ideas behind the proof of Theorem~\ref{thm:main} will overlap with those of~\cite{CK18} and \cite{LR18} and it is perhaps advisable to first familiarise oneself with these two (significantly shorter) papers. In the following, we will sketch the idea behind our result, which requires some additional notation.

Given a word $\bw = w_1 \dots w_n \in [m]^n$, let its \emph{contraction} $\overline{\bw}$ be obtained by contracting every interval on which $\bw$ is constant to a single letter of $[m]$.
\begin{example}
	The word $\bw = 11233322$ has the contraction $\overline{\bw} = 1232$.
\end{example}
We write $\mP(m,n) = \{ \overline{\bw} : \bw \in [m]^n\}$ for the set of \emph{patterns} with alphabet $[m]$ of length at most $n$. Given some pattern $\bp = p_1 \dots p_k \in \mP(m,n)$ and $i \in [m]$, we also use the notation
\begin{equation}
	\varphi_i(\bp) = \# \{1 \leq j \leq k : p_j = i \} \quad \text{and} \quad \varphi(\bp) = \big(\varphi_1(\bp),\dots,\varphi_m(\bp)\big).
\end{equation}
We refer to $\varphi(\bp)$ as the \emph{count} of $\bp$. Lastly, we write
\begin{equation}
	\varphi^{(q+1)}_i(\bp) = \varphi_i(\bp) \pmod{ q+1} \quad \text{and} \quad \varphi^{(q+1)}(\bp) = \big(\varphi^{(q+1)}_1(\bp),\dots,\varphi^{(q+1)}_m(\bp)\big)
\end{equation}
and refer to $\varphi^{(q+1)}(\bp) \in \ZZ_{q+1}^m$ as the \emph{reduced count} of $\bp$. From now on, the number of colours will always be equal to $q+1$ and we aim to find a monochromatic $q$--fold combinatorial line.

\medskip

The colouring that Conlon and the first author constructed to show that $\mI(3, r) \geq r$ for odd $r$ is in fact a function of $\varphi^{(q+1)}(\overline{\bp})$. We will show that colourings of this type are inherent to the problem by passing precisely from any colouring of $[3]^n$ to a function of $\varphi^{(q+1)}(\overline{\bp})$.

We can describe the idea of the proof as follows: consider the $q$--fold combinatorial lines in $[3]^n$ as the hyperedges of a $3$--uniform hypergraph with vertex set $[3]^n$. Any colouring of $[3]^n$ that avoids monochromatic $q$--fold combinatorial lines simply corresponds to a proper vertex-colouring of this hypergraph, i.e.~a colouring with no monochromatic edges. Using a purely Ramsey-theoretic argument, we first show that for any such colouring and $n$ large enough we can find a sub-hypergraph isomorphic to the $q$--fold combinatorial lines in $[3]^{\tilde{n}}$, where $\tilde{n}$ is significantly smaller than $n$, with the following important property: any two words in this sub-hypergraph that have the same contraction must also have the same colour. We can therefore identify all words in this sub-hypergraph that get contracted to the same pattern.

We continue by showing that this sub-hypergraph has a rich structure, consisting of many interlaced cliques of size $q+1$, that is it contains $q$--powers of arbitrarily long paths. Besides establishing that any proper colouring of the original hypergraph requires at least $q+1$ colours, this structure will imply that, within a significant part of our sub-hypergraph, we can identify patterns with each other if they have the same reduced count $\varphi^{(q+1)}$. This establishes that any $(q+1)$--colouring of $[3]^n$ that avoids monochromatic $q$--fold combinatorial lines implies the existence of a proper $(q+1)$--colouring of a particular hypergraph with vertex set $\ZZ_{q+1}^{3}$. This hypergraph is translation-invariant, has edges between any two vertices which differ in a single coordinate as well as some important additional restrictions. These restrictions imply that it cannot be $(q+1)$--colourable for odd $q$, from which the main theorem follows.

We note that everything up to the bound on the chromatic number holds for general $q$ and that the initial Ramsey theoretic reduction to the patterns also holds for general $m$. Therefore, our proof along with the colouring from~\cite{CK18} give a good intuition on why the function $\mI(3, r)$ displays the afore-mentioned alternating behaviour.

\medskip

\noindent {\bf Outline. }  We start Section~\ref{sec:proof} by formally defining the hypergraphs just outlined in the sketch of our proof. We proceed by formalising the reduction to the patterns and then to the reduced count. The final proof ingredient is a lower bound on the chromatic number of the hypergraph defined using the reduced count. Section~\ref{sec:remarks} contains some further remarks and open questions.

\section{Proof of Theorem~\ref{thm:main}} \label{sec:proof}

Given some pattern $\bp \in \mP(3,n)$ and $k \in \NN$, the notation
\begin{equation}
	(\bp)^{k} = \overset{k \text{ times}}{\overbrace{\bp\bp \dots \bp}}
\end{equation}
refers to the $k$--fold repetition of that pattern. For the rest of the section, we fix
\begin{equation}
	k_0 = k_0(q) = 10q+6
\end{equation}
and define the \emph{buffered version} of a pattern $\bx = x_1 \dots x_{k'} \in \mP(3,k_0)$ satisfying $x_1 \neq 1$ and $x_{k'} \neq 2$ to be
\begin{equation}
	\bx^{+} = 1 \, \bx \, (23)^{2k_0} \, (13)^{2k_0} \, (21)^{2k_0} \, 231 \: \in \, \mP(3,7k_0+4).
\end{equation}
Using these definitions, we define the following hypergraphs for any $m \geq 2$. 
\begin{itemize} \setlength\itemsep{0em}
	\item	$\bH(m,n,q)$ refers to the hypergraph with vertex set $[m]^n$ and edge set consisting of all $q$--fold combinatorial lines in $[m]^n$.
	\item	$\bP(m,n,q)$ refers to the hypergraph obtained from $\bH(m,n,q)$ by identifying vertices whose contractions are the same and keeping the hyperedges. Each vertex of $\bH(m, n, q)$ is mapped to its contraction in $\bP(m, n, q)$.
	\item	$\bC(n,q)$ refers to the hypergraph on $\ZZ^3_{q+1}$ obtained by taking the sub-hypergraph of $\bP(3,n,q)$ induced by the set of vertices $\bp \in \mP(3,7k_0 + 4 + q)$ that are buffered, that is $\bp = \bx^{+}$ for some appropriate $\bx \in \mP(3,k_0)$, and then identifying and labelling vertices based on the reduced count $\varphi^{(q+1)}(\bx)$. That is, a hyperedge $\{\bx ^+, \bu ^+, \bv^+ \}$ in $\bP(3, n, q)$ induces the hyperedge  $\{\varphi^{(q+1)}(\bx), \varphi^{(q+1)}(\bu), \varphi^{(q+1)}(\bv) \}$ in $\bC(n, q)$, assuming $\bx^+,\bu^+$ and $\bv^+$ are of length at most $7k_0 + 4 + q$.
\end{itemize}

In the remainder of this section, we first show that if $\bP(m,n,q)$ is not $r$--colourable, then neither is $\bH(m,N,q)$ for some appropriately large $N$. Then we will assume that $n \geq 7k_0 + 4 + q$, set $r = q+1$ and show that if $\bC(n,q)$ is not $(q+1)$--colourable, then neither is $\bP(3,n,q)$. We conclude the proof of Theorem~\ref{thm:main} by showing that any $(q+1)$--colouring of $\bC(n,q)$ must contain a monochromatic (hyper)edge.

\subsection{The reduction to patterns}
The notation and idea behind this part are derived from the approach of Leader and R\"aty~\cite{LR18} for the specific case of $q = 1$.

We define the set of \emph{breakpoints} of a given word $\bw = w_1 \dots w_n \in [m]^n$ to be the set $T(\bw) = \{a_1, \dots, a_k\}$ for which $w_{a_{i-1}+1} = \dots = w_{a_i}$ and $w_{a_i} \neq w_{a_i+1}$ for $1 \leq i \leq k+1$ where we set $a_0 = 0$ and $a_{k+1} = n$. Let $S^{(k)}$ refer to all subsets of size $k$ of some given set $S$. Given some $N \geq n$ and $A = \{a_1 < \dots < a_{n-1} \} \in [N-1]^{(n-1)}$, let $\bw^A$ denote the word $\bw^A = w^A_1 \dots w^A_N \in [m]^N$ defined by $w^A_{a_{i-1}+1} = \dots = w^A_{a_i} = w_{i}$ for $1 \leq i \leq n$ where we set $a_0 = 0$ and $a_n = N$. Note that in general $\overline{\bw^A} = \overline{\bw}$ and $T(\bw ^A) \subseteq A$. Specifically $T(w^A) = A$ if and only if $\overline{\bw} = \bw$.
\begin{example}
	Let $\bw = 13323$ be given. We have $T(\bw) = \{1,3,4\}$. If $A = \{2,3,5,6\}$ and $N = 8$ then $\bw^A = 11 \, 3 \, 33 \,2 \, 33$.
\end{example}
 This notation allows us to make the following statement.

\begin{proposition} \label{prop:HtoP}
	For any $n \in \NN$ there exists $N = N(n) \in \NN$	so that for any colouring $\chi$ of $[m]^N$ there exists $A = A(n,N,\chi) \in [N-1]^{(n-1)}$ such that $\chi(\bw_1^A) = \chi(\bw_2^A)$ for any $\bw_1,\bw_2 \in [m]^n$ satisfying $\overline{\bw_1} = \overline{\bw_2}$.
\end{proposition}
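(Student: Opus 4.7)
The plan is to reduce the proposition to an iterated application of the hypergraph Ramsey theorem via the breakpoint encoding of words.

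First I would establish the structural identity
\begin{equation}
T(\bw^A) = \{a_t : t \in T(\bw)\},
\end{equation}
which follows directly from the definition of $\bw^A$: within any single block of $\bw^A$ the letter is constant, so a break in $\bw^A$ can only occur at one of the block-boundaries $a_t$, and it does occur there precisely when $w_t \neq w_{t+1}$, i.e.~when $t \in T(\bw)$. Fixing a pattern $\bp$ of length $k \geq 2$, it then follows that $\bw \mapsto T(\bw^A)$ is a bijection between $\{\bw \in [m]^n : \overline{\bw} = \bp\}$ and $\binom{A}{k-1}$, both sets having size $\binom{n-1}{k-1}$.

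Second, I would rephrase the desired conclusion in set-theoretic terms. Let $r$ denote the number of colours used by $\chi$. For each pattern $\bp$ of length $k$ with $2 \leq k \leq n$, define an auxiliary colouring $\chi^\sharp_\bp : \binom{[N-1]}{k-1} \to [r]$ by $\chi^\sharp_\bp(B) = \chi(\bu)$, where $\bu \in [m]^N$ is the unique word with $\overline{\bu} = \bp$ and $T(\bu) = B$. Combined with the bijection from the first step, the proposition reduces to finding $A \in \binom{[N-1]}{n-1}$ such that $\chi^\sharp_\bp$ is constant on $\binom{A}{k-1}$ for every such $\bp$; patterns of length $1$ correspond to constant words and need no treatment.

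Third, since the number of patterns of length at most $n$ is finite, I would enumerate the relevant ones as $\bp_1, \dots, \bp_T$ and apply the hypergraph Ramsey theorem iteratively. Starting with $A_0 = [N-1]$, at step $i$ use Ramsey for $(|\bp_i|-1)$-uniform hypergraphs with $r$ colours to pass to a subset $A_i \subseteq A_{i-1}$ on which $\chi^\sharp_{\bp_i}$ is monochromatic. Choosing $N$ sufficiently large as a tower of Ramsey numbers depending on $n$, $m$ and $r$ then guarantees $|A_T| \geq n-1$, and any $(n-1)$-subset of $A_T$ serves as the required $A$.

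There is no serious obstacle here; the content of the argument is really the observation that the map $\bw \mapsto T(\bw^A)$ converts the problem into a standard Ramsey-theoretic question for hypergraphs of bounded uniformity. The resulting bound on $N$ grows rapidly with $n$ since the uniformities $k-1$ range up to $n-1$, but no attempt at optimisation is required for the purposes of the main theorem.
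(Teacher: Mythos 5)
Your proposal is correct and takes essentially the same route as the paper: your auxiliary colouring $\chi^\sharp_{\bp}$ is exactly the paper's $\chi_i(A)=\chi(\bp_i^A)$, and your iterated hypergraph Ramsey argument over all patterns of length at most $n$ mirrors the paper's nested sets $T_k\supset\dots\supset T_0$ with $N$ a tower of Ramsey numbers. (Like the paper, you implicitly treat the number of colours $r$ as fixed in advance rather than determined by $\chi$, which is the intended reading of the statement.)
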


\begin{proof}
	Give the patterns in $\mP(m,n)$ an arbitrary ordering, say $\mP(m,n) = \{ \bp_1, \dots, \bp_k \}$, and write $t_i = |\bp_i|$ for their respective length. Set $n_0 = n-1$ and recursively define $n_i = R^{(t_i-1)}(n_{i-1})$ for $1 \leq i \leq k$ where $R^{(t)}(s) = R^{(t)}(s,s)$ is the $t$--set Ramsey number. Lastly, set $N = N(n) = n_k + 1$.
	
	Let us recursively define sets $T_k \supset \dots \supset T_1 \supset T_0$ satisfying $|T_i| \geq n_i$ for $0 \leq i \leq k$ as well as certain properties with respect to the colouring $\chi$. We start by setting $T_k = [N-1]$. Let $|T_i| \geq n_i$ be given and observe that $\chi$ induces a colouring $\chi_i$ on $T_i^{(t_i-1)}$ given by $\chi_i(A) = \chi(\bp_i^A)$ for $A \in T_i^{(t_i-1)}$. Now, since by definition $n_i = R^{(t_i-1)}(n_{i-1})$, it follows that there exists $T_{i-1} \subset T_i$ satisfying $|T_{i-1}| \geq n_{i-1}$ such that $T_{i-1}^{t_i-1}$ is monochromatic with respect to $\chi_i$.
	
	Now fix some $A = \{a_1 < \dots < a_{n-1}\} \in T_0^{(n-1)}$. Consider two words $\bw_1,\bw_2 \in [m]^n$ which satisfy $\overline{\bw_1} = \overline{\bw_2} = \bp_j$ for some $1 \leq j \leq k$. We note that there exist $A_1,A_2 \in A^{(t_j-1)}$ such that $\bw_1^A = \bp_j^{A_1}$ and $\bw_2^A = \bp_j^{A_2}$, that is $T(\bw_1^A) = A_1$ and $T(\bw_2^A) = A_2$. Since $T_0 \subset T_{j-1}$, we have $A_1,A_2 \in T_{j-1}^{(t_j - 1)}$ and therefore
	\begin{equation*}
		\chi(\bw_1^A) = \chi(\bp_j^{A_1}) = \chi_j(A_1) = \chi_j(A_2) = \chi(\bp_j^{A_2}) = \chi(\bw_2^A)
	\end{equation*}
	as desired.
\end{proof}
The lemma states that within $[m]^N$ we can find a `copy' of $[m]^n$ in which any two words with the same contraction must also have the same colour. The following corollary captures this point.
\begin{corollary} \label{cor:HtoP}
	For any $n \in \NN$ there exists $N = N(n) \in \NN$ so that if $\bP(m,n,q)$ is not $r$--colourable, then neither is $\bH(m,N,q)$.	
\end{corollary}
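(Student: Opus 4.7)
The plan is to deduce the corollary from Proposition \ref{prop:HtoP} by contrapositive. Suppose $\bH(m,N,q)$ admits a proper $r$-colouring $\chi$ with $N = N(n)$ as given by the proposition, and apply the proposition to obtain a set $A \in [N-1]^{(n-1)}$ for which the colour $\chi(\bw^A)$ depends only on the contraction $\overline{\bw}$. This lets me define a colouring $\chi'$ on $\mP(m,n)$, the vertex set of $\bP(m,n,q)$, by $\chi'(\bp) = \chi(\bw^A)$ for any $\bw \in [m]^n$ with $\overline{\bw} = \bp$, which is well-defined since every pattern in $\mP(m,n)$ is the contraction of at least one word in $[m]^n$.

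The substantive step is to verify that $\chi'$ is a proper $r$-colouring of $\bP(m,n,q)$. A hyperedge of $\bP(m,n,q)$ arises from some $q$-fold line template $\bell \in \big([m] \cup \{\star\}\big)^n$ as the set $\{\overline{\bell[1]}, \dots, \overline{\bell[m]}\}$. Extending the stretching operation from words to templates position by position, one obtains $\bell^A \in \big([m] \cup \{\star\}\big)^N$ which satisfies $\bell^A[\alpha] = \bell[\alpha]^A$ for every $\alpha \in [m]$. The key geometric observation is that each position of $\bell$ is expanded into a contiguous block of positions in $\bell^A$, so a set of active coordinates forming at most $q$ intervals in $\{1,\dots,n\}$ is transformed into a set of active coordinates forming at most $q$ intervals in $\{1,\dots,N\}$. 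Hence $\bell^A$ is itself a $q$-fold combinatorial line in $[m]^N$, and the set $\{\bell[1]^A,\dots,\bell[m]^A\}$ is a hyperedge of $\bH(m,N,q)$.

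Since $\chi$ is proper, this stretched hyperedge is not monochromatic, so there exist $\alpha, \beta \in [m]$ with $\chi(\bell[\alpha]^A) \neq \chi(\bell[\beta]^A)$. Stretching does not affect the contraction because inserting consecutive copies of a letter introduces no new breakpoints, so $\overline{\bell[\alpha]^A} = \overline{\bell[\alpha]}$ and likewise for $\beta$. By definition of $\chi'$ this gives $\chi'(\overline{\bell[\alpha]}) \neq \chi'(\overline{\bell[\beta]})$, so the original hyperedge in $\bP(m,n,q)$ is not monochromatic, completing the argument.

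The corollary is essentially an unpacking of Proposition \ref{prop:HtoP}, so there is no significant obstacle. The only point that requires a brief verification is that the stretching map $\bell \mapsto \bell^A$ sends $q$-fold combinatorial lines to $q$-fold combinatorial lines, which follows from the fact that it replaces each coordinate with a contiguous block of identical coordinates and hence cannot split an interval of active coordinates.
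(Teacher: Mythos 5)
Your proof is correct and follows essentially the same route as the paper: define the colouring on $[m]^n$ (equivalently on patterns) via $\bw \mapsto \chi(\bw^A)$ using Proposition~\ref{prop:HtoP}, and use that the stretching $\bell \mapsto \bell^A$ sends $q$--fold combinatorial lines to $q$--fold combinatorial lines. You merely make explicit two points the paper leaves implicit, namely that stretching each coordinate into a contiguous block cannot increase the number of active intervals and that $\bell^A[\alpha] = \bell[\alpha]^A$.
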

\begin{proof}
	Given some proper colouring $\chi$ of $[m]^N$ we note that $\chi'$ given by $\chi'(\bw) = \chi(\bw^A)$ for $\bw \in [m]^n$ defines a proper colouring of $[m]^n$. This follows since any combinatorial line $\ell$ in $[m]^n$ connecting $\{\ell[1],\dots,\ell[m]\}$ corresponds to the combinatorial lines $\ell^A$ in $[m]^N$ connecting $\{\ell[1]^A,\dots,\ell[m]^A\}$. Proposition~\ref{prop:HtoP} now implies that $\chi'$ also induces a proper colouring of $\bP(m,n,q)$, proving the statement.
\end{proof}

Before we proceed to the reduction to the reduced count, let us describe the structure of the hyperedges in $\bP(m,n,q)$. Let us write $\mP_{\star}(m,n) = \{ \overline{\bell} : \bell \in [m]_{\star}^n\}$ where the contraction also contracts repeated occurrences of the symbol $\star$. We start with  a simple observation which follows from the definition of $\bP(m, n, q)$.
\begin{lemma} \label{lemma:hyperedges_P}
	For any $\bell \in \mP_{\star}(m,n)$, the set $\left \{ \overline{\bell[1]}, \overline{\bell[2]}, \dots, \overline{\bell[m]}\right \}$ forms an edge in $\bP(m,n,q)$.
\end{lemma}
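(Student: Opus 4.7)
The plan is to prove the lemma by inverting the contraction map from $\bH(m,n,q)$ to $\bP(m,n,q)$. Concretely, I would lift $\bell$ to a combinatorial line $\bell_0 \in [m]_\star^n$ whose contraction is $\bell$, verify that $\bell_0$ is a $q$-fold combinatorial line, and then push the resulting edge of $\bH(m,n,q)$ down to $\bP(m,n,q)$ via the quotient map that identifies words with equal contractions.

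By the very definition of $\mP_\star(m,n)$, some $\bell_0 \in [m]_\star^n$ with $\overline{\bell_0} = \bell$ exists. Since the contraction operation collapses consecutive occurrences of $\star$ in addition to consecutive equal letters of $[m]$, the maximal runs of $\star$ in $\bell_0$ are in bijection with the $\star$ symbols appearing in $\bell$. In particular, the number of active intervals of $\bell_0$ equals the number of $\star$'s in $\bell$, which we may assume to be at most $q$ (otherwise no $q$-fold lift of $\bell$ exists and the claim is vacuously irrelevant to $\bP(m,n,q)$). Hence $\bell_0$ is a $q$-fold combinatorial line, so $\{\bell_0[1], \ldots, \bell_0[m]\}$ is an edge of $\bH(m,n,q)$.

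To finish, I would verify the identity $\overline{\bell_0[\alpha]} = \overline{\bell[\alpha]}$ for every $\alpha \in [m]$: this amounts to observing that substitution for $\star$ and contraction commute, since replacing a maximal run of $\star$'s in $\bell_0$ by $\alpha$'s and then contracting yields the same word as first contracting that run into a single $\star$ (yielding $\bell$), replacing the $\star$ by $\alpha$, and contracting. A short induction on the length of the word makes this rigorous. Given this identity, the hyperedge $\{\bell_0[1], \ldots, \bell_0[m]\}$ of $\bH(m,n,q)$ descends under the quotient map to $\{\overline{\bell_0[1]}, \ldots, \overline{\bell_0[m]}\} = \{\overline{\bell[1]}, \ldots, \overline{\bell[m]}\}$, which is exactly the claim. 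The entire argument is essentially a routine unpacking of the definitions, so no real obstacle is anticipated; the only place requiring a brief careful check is the commutativity of substitution and contraction in the final step.
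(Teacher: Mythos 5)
Your proposal is correct and takes essentially the same approach as the paper: lift $\bell$ to a combinatorial line $\bell_0 \in [m]_\star^n$ (the paper uses the explicit lift $\bell_1 \dots \bell_k (\bell_k)^{n-k}$, i.e.\ padding by the last symbol, whereas you take an arbitrary preimage), observe that $\overline{\bell_0[\alpha]} = \overline{\bell[\alpha]}$ because substitution and contraction commute, and project the resulting hyperedge of $\bH(m,n,q)$ to $\bP(m,n,q)$. You also correctly flag that $q$-foldness of the lift requires $\bell$ to contain at most $q$ symbols $\star$ --- a hypothesis the paper's statement and proof leave implicit (and which holds in every application), though describing the general case as ``vacuously irrelevant'' is a bit loose; it would be cleaner to simply say the lemma is stated and used only under that tacit assumption.
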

\begin{proof}
	Let $\bell = \bell_1 \dots \bell_k$ where $k \leq n$. Clearly $\bell' = \bell_1 \dots \bell_k (\bell_k)^{n-k} \in [m]_{\star}^n$ is a combinatorial line in $[m]^n$ so that $\{ \overline{\bell[1]}, \overline{\bell[2]}, \dots, \overline{\bell[m]} \} = \{ \overline{\bell'[1]}, \overline{\bell'[2]}, \dots, \overline{\bell'[m]} \}$ forms an edge in $\bP(m,n,q)$.
\end{proof}
In general, the edges described in the previous lemma are of order $m$ or $m-1$. The central observation being used for the next reduction is that for the case of $m = 3$ a combinatorial line of the form $\bell = 1\star2$ \emph{connects} the patterns $12$ and $132$ by an edge of order two, since both $\bell[1]$ and $\bell[2]$ get contracted to the same pattern. This is a particularity of that alphabet order and the main reason why this approach does not easily extend to larger $m$.

Let us derive a precise description of when an edge of order two occurs between two vertices in $\bP(3,n,q)$. Edges of order two  will be sufficient in order to realise the reduction to the reduced count, but edges of order three will be crucial at the end of the section when establishing the lower bound on the chromatic number for odd $q$.

We start by introducing two more necessary notions. Let $\{\alpha_1, \alpha_2, \alpha_3\} = \{1, 2, 3 \}$ and $\bp \in \mP(3,n)$. An \emph{$\alpha_3$--insertion} in $\bp$ is the operation of inserting a copy of the letter $\alpha_3$ between an instance of $\alpha_1$ and an instance of $\alpha_2$ in $\bp$. An \emph{$\alpha_3$--alteration} of $\bp$ is the operation of moving one instance of $\alpha_3$ whose neighbours in $\bp$ are $\alpha_1$ and $\alpha_2$ to another part of $\bp$ so that its neighbouring letters are again $\alpha_1$ and $\alpha_2$.
\begin{example}
	The pattern $13212$ can be obtained from $1212$ by a $3$--insertion and from $12312$ by a $3$--alteration.
\end{example}
We will only need the notion of insertion for the remainder of this subsection, but alterations will become important in the next one.
\begin{lemma} \label{lemma:edges_P}
	Let $\bp_1, \bp_2$ be two patterns in $\mP(3,n)$ and $\alpha \in [3]$. If $\bp_2$ is obtained from $\bp_1$ by at most $q$ successive $\alpha$--insertions, then $\bp_1$ and $\bp_2$ are adjacent in $\bP(3,n,q)$.
\end{lemma}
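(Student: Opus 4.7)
The plan is to build an explicit $q$-fold combinatorial line $\bell \in [3]^n_\star$ whose three instantiations contract to $\bp_1$ or $\bp_2$. Write $\{\alpha_1,\alpha_2,\alpha\}=\{1,2,3\}$, let $k=|\bp_2|$, and let $I\subseteq\{1,\ldots,k\}$ denote the set of (at most $q$) positions in $\bp_2$ occupied by the inserted copies of $\alpha$. I would define $\bell$ by placing $\star$ at every position in $I$, copying $\bp_2$ at every position of $\{1,\ldots,k\}\setminus I$, and padding positions $k+1,\ldots,n$ with the last letter $\bp_2[k]$ (which is well defined since an inserted $\alpha$ can never be at the boundary of $\bp_2$).

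The main structural claim I would have to verify is that in $\bp_2$ the two immediate neighbours of every inserted $\alpha$ (i.e.\ every $j\in I$) are precisely one copy of $\alpha_1$ and one copy of $\alpha_2$, in some order. I would prove this by a short induction on the sequence of insertions producing $\bp_2$. At the moment a new $\alpha$ is placed its neighbours are an $\alpha_1$ and an $\alpha_2$ by definition; any later insertion must again be placed between an $\alpha_1$ and an $\alpha_2$ in the current pattern, and in particular never between a previously inserted $\alpha$ and one of its current neighbours, so the local environment of each already-placed $\alpha$ is preserved all the way through to $\bp_2$. As an immediate by-product, no two positions of $I$ are adjacent in $\bp_2$, so the $\star$s of $\bell$ are isolated and its active coordinates lie in at most $|I|\le q$ intervals — $\bell$ is genuinely $q$-fold.

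With this in hand the three instantiations are easy to read off. Setting $\star=\alpha$ gives $\bp_2$ on the first $k$ coordinates followed by a repetition of $\bp_2[k]$, so $\overline{\bell[\alpha]}=\bp_2$. Setting $\star=\alpha_1$ turns each substring $\alpha_1\alpha\alpha_2$ of $\bp_2$ into $\alpha_1\alpha_1\alpha_2$ (and symmetrically $\alpha_2\alpha\alpha_1$ into $\alpha_2\alpha_1\alpha_1$), which contracts to the same word as if the inserted $\alpha$ were simply deleted; the analogous statement holds for $\star=\alpha_2$. Iterating this deletion over all positions of $I$ recovers $\bp_1$ by the definition of insertion, so $\overline{\bell[\alpha_1]}=\overline{\bell[\alpha_2]}=\bp_1$. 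Hence the hyperedge of $\bH(3,n,q)$ corresponding to $\bell$ projects to the hyperedge $\{\bp_1,\bp_2\}$ in $\bP(3,n,q)$, witnessing the desired adjacency. I do not expect a real obstacle here; the only delicate step is the induction above, and it hinges entirely on the rule that insertions are permitted only between adjacent occurrences of the two non-$\alpha$ letters.
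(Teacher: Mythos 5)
Your proof is correct and follows essentially the same strategy as the paper's: replace the at most $q$ inserted letters in $\bp_2$ by $\star$, then note that instantiating $\star$ as $\alpha$ returns $\bp_2$ while instantiating it as $\alpha_1$ or $\alpha_2$ contracts to $\bp_1$. The only cosmetic difference is that the paper works at the level of starred patterns and invokes Lemma~\ref{lemma:hyperedges_P} for the padding, whereas you build the word in $[3]^n_\star$ directly and spell out the (implicit in the paper) observation that the inserted $\alpha$s remain isolated and sandwiched between $\alpha_1$ and $\alpha_2$, so the line is genuinely $q$-fold.
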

\begin{proof}
	Assume without loss of generality that $\alpha = 3$. We will construct some $\bell \in \mP_{\star}(3,n)$ satisfying $\overline{\bell[1]} = \overline{\bell[2]} = \bp_1$ and $\overline{\bell[3]} = \bp_2$. Let $\bp_2 = p_1 \dots p_k$ and let $j_1,\dots,j_{k'}$ denote the $k' \leq q$ indices of the $3$--insertions that take one from $\bp_1$ to $\bp_2$, that is if one removes $p_{j_1},\dots,p_{j_{k'}}$ from $\bp_2$ then one obtains $\bp_1$. We now define $\bell = \ell_1 \dots \ell_k$ by
	\begin{equation}
		\ell_i = \left\{\begin{array}{ll}
			p_i & \text{for } i \in \{1,\dots,k\} \setminus \{j_1,\dots,j_{k'}\},\\
			\star & \text{for } i \in \{j_1,\dots,j_{k'}\}.
        \end{array}\right.
	\end{equation}
	It immediately follows that $\overline{\bell[1]} = \overline{\bell[1]} = \bp_1$ and $\overline{\bell[3]} = \bp_2$, so by Lemma~\ref{lemma:hyperedges_P} $\bell$ forms an edge between $\bp_1$ and $\bp_2$ in $\bP(3,n,q)$.
\end{proof}

\subsection{The reduction to the reduced count} \label{subsec:reducedcount}

Let us introduce one last additional definition. For $\{ \alpha_1,\alpha_2,\alpha_3 \} = \{1,2,3\}$ we call a pattern in $\mP(3,n)$ \emph{$\alpha_3$--diverse} if it is of length at most $n-q$ and it contains at least $q$ copies of either of the subwords $\alpha_1 \alpha_2$ or $\alpha_2 \alpha_1$.
\begin{example}
	The pattern $121$ is $3$--diverse if $q \leq 2$ and $n \geq 5$
\end{example}
Note that we are not yet restricting ourselves to buffered patterns for the following remark and the subsequent lemma.
\begin{remark} \label{rmk:diverse}
	For any $\alpha$--diverse pattern	 $\bp \in \mP(3,n)$ there exists a sequence $\bp = \bb_1, \dots, \bb_{q+1}$ in $\mP(3,n)$ so that $\bb_{i+1}$ can be obtained from $\bb_i$ by an $\alpha$--insertion for $1 \leq i \leq q$.
\end{remark}
\begin{lemma} \label{lemma:alterationsinsertions}
	Let $\chi$ be a proper $(q+1)$--colouring of $\bP(3,n,q)$, $\bp_1,\bp_2 \in \mP(3,n)$ and $\alpha \in \{1,2,3\}$. We have $\chi(\bp_1) = \chi(\bp_2)$ if either the following two cases holds:
	\begin{enumerate} \setlength\itemsep{0em}
		\item[(i)]	$\bp_2$ can be obtained from $\bp_1$ by exactly $q+1$ $\alpha$--insertions,
		\item[(ii)]	$\bp_1$ and $\bp_2$ are $\alpha$--diverse patterns and $\bp_2$ can be obtained from $\bp_1$ by an $\alpha$--alteration.
	\end{enumerate}
\end{lemma}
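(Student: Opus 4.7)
The plan is to exploit Lemma~\ref{lemma:edges_P} to build cliques of size $q+1$ in the pairwise-adjacency graph underlying $\bP(3,n,q)$; under a proper $(q+1)$-colouring each such clique must display all $q+1$ colours, so any two cliques sharing $q$ common vertices force the two remaining vertices to receive the unique missing colour.

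For (i), we take a witnessing sequence $\bp_1 = \bb_0, \bb_1, \ldots, \bb_{q+1} = \bp_2$ of $q+1$ successive $\alpha$-insertions. The lengths $|\bb_j|$ are strictly increasing, so the $\bb_j$ are pairwise distinct, and Lemma~\ref{lemma:edges_P} renders any two within distance $q$ in the sequence adjacent in $\bP(3,n,q)$. Hence $\{\bb_0, \ldots, \bb_q\}$ and $\{\bb_1, \ldots, \bb_{q+1}\}$ are both $(q+1)$-cliques; they share $\bb_1, \ldots, \bb_q$, and so $\bp_1 = \bb_0$ and $\bp_2 = \bb_{q+1}$ must both take the unique colour missing from $\{\chi(\bb_1), \ldots, \chi(\bb_q)\}$.

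For (ii), the central combinatorial observation is that if $\bp'$ is obtained from $\bp_1$ by inserting $\alpha$ at the `new' slot---namely the position between $\alpha_1$ and $\alpha_2$ where $\alpha$ ends up in $\bp_2$---then $\bp'$ coincides with the pattern obtained from $\bp_2$ by inserting $\alpha$ at the `old' slot where $\alpha$ sat in $\bp_1$. In particular, both $\bp_1 \to \bp'$ and $\bp_2 \to \bp'$ are single $\alpha$-insertions. Using the $\alpha$-diversity of $\bp_1$ together with Remark~\ref{rmk:diverse}, we extend this step into a chain $\bp_1 \to \bp' = \bb_1 \to \bb_2 \to \cdots \to \bb_q$ of $q$ consecutive $\alpha$-insertions. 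The suffix $\bb_1, \ldots, \bb_q$ is then reached from $\bp_2$ in $1, 2, \ldots, q$ steps respectively, so $\{\bp_1, \bb_1, \ldots, \bb_q\}$ and $\{\bp_2, \bb_1, \ldots, \bb_q\}$ are both $(q+1)$-cliques by Lemma~\ref{lemma:edges_P}, and the argument closes exactly as in (i).

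The step requiring the most care is arranging the first insertion in (ii) to land on the prescribed `new' slot while still leaving enough $\alpha_1\alpha_2$ or $\alpha_2\alpha_1$ subwords available for the remaining $q-1$ insertions guaranteed by Remark~\ref{rmk:diverse}. This is precisely where the $\alpha$-diversity hypothesis is indispensable: $\bp_1$ provides at least $q$ such subwords, the initial insertion consumes exactly one and destroys no others, so $\geq q-1$ remain—just enough to complete the chain.
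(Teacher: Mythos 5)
Your proof is correct and follows essentially the same strategy as the paper: use Lemma~\ref{lemma:edges_P} to produce a clique of order $q$ (a chain of successive $\alpha$--insertions) lying in the common neighbourhood of $\bp_1$ and $\bp_2$, so that in a proper $(q+1)$--colouring both must receive the unique colour missing from that clique. The only cosmetic difference is in case~(ii), where the paper's $q$--clique consists of the common ``parent'' $\bb_0$ together with a chain of $q-1$ patterns above the common ``child'', whereas you use a chain of $q$ patterns starting at the common child $\bp'$; both variants are valid and rest on the same observation.
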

\begin{proof}
	Let us start with case~(i). By assumption, there exists a sequence of patterns $\bp_1 = \bb_0,\bb_1,\dots,\bb_q,\bb_{q+1} = \bp_2$ in $\mP(3,n)$ so that $\bb_{i+1}$ is obtained from $\bb_i$ by an $\alpha$--insertion for any $0 \leq i \leq q$. By Lemma~\ref{lemma:edges_P}, there is an edge connecting $\bb_i$ to $\bb_j$ if $|i-j| \leq q$ and $i \neq j$. It follows that $\{ \bb_1,\dots,\bb_q \}$ is a clique of order $q$ that lies in the neighbourhood of both $\bp_1$ and $\bp_2$. The desired statement follows.
	\begin{center}
	\begin{tikzpicture}
	\begin{scope}
		\draw[black, line width=0.3mm] (0,0) -- (1.5,0);
		\draw[black, line width=0.3mm] (1.5,0) -- (3,0);
		\draw[black, dotted, line width=0.3mm] (3,0) -- (3.5,0);
		\draw[black, dotted, line width=0.3mm] (4.5,0) -- (5,0);
		\draw[black, line width=0.3mm] (5,0) -- (6.5,0);
		\node[circle,fill=black,inner sep=2pt,minimum size=4pt,label=left:{$\bp_1 = \bb_0$}] (char) at (0,0) {\color{white} \scriptsize \bf };
		\node[circle,fill=black,inner sep=2pt,minimum size=4pt,label=above:{$\bb_1$}] (char) at (1.5,0) {\color{white} \scriptsize \bf };
		\node[circle,fill=black,inner sep=2pt,minimum size=4pt,label=above:{$\bb_2$}] (char) at (3,0) {\color{white} \scriptsize \bf };
		\node[circle,fill=black,inner sep=2pt,minimum size=4pt,label=above:{$\bb_q$}] (char) at (5,0) {\color{white} \scriptsize \bf };
		\node[circle,fill=black,inner sep=2pt,minimum size=4pt,label=right:{$\bb_{q+1} = \bp_2$}] (char) at (6.5,0) {\color{white} \scriptsize \bf };
	\end{scope}
	\end{tikzpicture}
	\end{center}

	Regarding case~(ii), one can easily see that if $\bp_2$ can be obtained from $\bp_1$ by an $\alpha$--alteration, then there exists a pattern $\bb_0$ so that both $\bp_1$ and $\bp_2$ can be obtained from $\bb_0$ by an $\alpha$--insertion. Since $\bp_1$ and $\bp_2$ are diverse, Remark~\ref{rmk:diverse} established that there also exist patterns $\bb_2, \bb_3, \dots, \bb_q$ in $\mP(3,n)$ so that $\bb_2$ can be obtained from both $\bp_1$ and $\bp_2$ by an $\alpha$--insertion and $\bb_{i+1}$ can be obtained from $\bb_i$ by an $\alpha$--insertion for $2 \leq i \leq q-1$.
	\begin{center}
	\begin{tikzpicture}
	\begin{scope}
		\draw[black, line width=0.3mm] (-1.5,0) -- (0,0.7);
		\draw[black, line width=0.3mm] (-1.5,0) -- (0,-0.7);
		\draw[black, line width=0.3mm] (0,0.7) -- (1.5,0);
		\draw[black, line width=0.3mm] (0,-0.7) -- (1.5,0);
		\draw[black, line width=0.3mm] (1.5,0) -- (3,0);
		\draw[black, dotted, line width=0.3mm] (3,0) -- (3.5,0);
		\draw[black, dotted, line width=0.3mm] (4.5,0) -- (5,0);
		\node[circle,fill=black,inner sep=2pt,minimum size=4pt,label=above:{$\bp_1$}] (char) at (0,0.7) {\color{white} \scriptsize \bf };
		\node[circle,fill=black,inner sep=2pt,minimum size=4pt,label=below:{$\bp_2$}] (char) at (0,-0.7) {\color{white} \scriptsize \bf };
		\node[circle,fill=black,inner sep=2pt,minimum size=4pt,label=left:{$\bb_0$}] (char) at (-1.5,0) {\color{white} \scriptsize \bf };
		\node[circle,fill=black,inner sep=2pt,minimum size=4pt,label=above:{$\bb_2$}] (char) at (1.5,0) {\color{white} \scriptsize \bf };
		\node[circle,fill=black,inner sep=2pt,minimum size=4pt,label=above:{$\bb_3$}] (char) at (3,0) {\color{white} \scriptsize \bf };
		\node[circle,fill=black,inner sep=2pt,minimum size=4pt,label=right:{$\bb_q$}] (char) at (5,0) {\color{white} \scriptsize \bf };
	\end{scope}
	\end{tikzpicture}
	\end{center}
	Again by Lemma~\ref{lemma:edges_P}, it follows that $\{ \bb_0,\bb_2,\dots,\bb_q \}$ form a clique of order $q$ that lies in the neighbourhood of both $\bp_1$ and $\bp_2$. The desired statement follows.
\end{proof}

Throughout the remainder of this part we will assume that $n \geq 7k_0+4+q$ and restrict ourselves to the patterns in the vertex set of $\bP(3,n,q)$ that are buffered, that is we will consider 
\begin{equation}
	\mP^+ = \{ \bp^+ : \bp = p_1,\dots,p_{k'} \in \mP(3, k_0) \text{ s.t. } p_1 \neq 2 \text{ and } p_{k'} \neq 1 \} \subset \mP(3,n-q).
\end{equation}
Note that we have chosen $n$ large enough so that this set is non-empty. Furthermore, since $k_0 = 10q + 6$ and $\mP^+ \subset \mP(3,n-q)$, every pattern contained in $\mP^+$ is clearly $\alpha$--diverse for any $\alpha \in [3]$. We can now establish the central lemma that allows us to perform the next reduction.
\begin{lemma} \label{lemma:countreduction}
	Let $\chi$ be a proper $(q+1)$--colouring of $\bP(3,n,q)$ and $\bp_1,\bp_2 \in \mP^+$. If the reduced count of the two patterns is the same, that is $\varphi^{(q+1)}(\bp_1) = \varphi^{(q+1)}(\bp_2)$, then $\chi (\bp_1) = \chi(\bp_2)$.
\end{lemma}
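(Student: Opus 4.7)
The plan is to build a colour-preserving chain $\bp_1 = \bb_0, \bb_1, \dots, \bb_t = \bp_2$ in $\mP(3,n)$ in which consecutive patterns are linked by one of the two moves furnished by Lemma~\ref{lemma:alterationsinsertions}: either (a) $q+1$ successive $\alpha$-insertions or (b) a single $\alpha$-alteration between two $\alpha$-diverse patterns. Telescoping then gives $\chi(\bp_1) = \chi(\bp_2)$.

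The construction proceeds in two stages. First I would equalise the counts: since $\varphi^{(q+1)}(\bp_1) = \varphi^{(q+1)}(\bp_2)$, we can write $\varphi(\bp_1) - \varphi(\bp_2) = (q+1)(d_1, d_2, d_3)$ for some $(d_1,d_2,d_3) \in \ZZ^3$. For each $\alpha \in [3]$, I would insert $(q+1)|d_\alpha|$ extra copies of $\alpha$ into whichever of $\bp_1,\bp_2$ has the smaller $\alpha$-count, grouped into batches of $q+1$ so that item~(i) of the lemma applies to each batch. These insertions go into the buffer segment in which the other two letters alternate (the segment $(\alpha_1 \alpha_2)^{2k_0}$ with $\{\alpha_1,\alpha_2,\alpha\} = [3]$), which offers well over the required number of valid slots between an $\alpha_1$ and an $\alpha_2$ since $k_0 = 10q+6$. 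This produces patterns $\bb_1, \bb_2$ with $\varphi(\bb_1) = \varphi(\bb_2)$, each still carrying an intact bigram-rich buffer, and satisfying $\chi(\bb_i) = \chi(\bp_i)$.

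Second, with equal counts in hand and the buffer guaranteeing $\alpha$-diversity for every $\alpha \in [3]$, I would transform $\bb_1$ into $\bb_2$ using $\alpha$-alterations through diverse intermediates. A clean way is to fix a canonical representative $\bb^{\mathrm{can}}$ for each count class — for instance, the pattern in which all ``floating'' letters beyond the buffer are collected in a prescribed order within a designated sub-segment — and reduce both $\bb_1$ and $\bb_2$ to $\bb^{\mathrm{can}}$ via alterations. The bigram-rich segments of the buffer ensure that alteration slots are always available and that every intermediate pattern remains $\alpha$-diverse, so item~(ii) of Lemma~\ref{lemma:alterationsinsertions} applies throughout.

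I expect this second stage to be the main obstacle. Alterations are restrictive local moves — they only relocate a letter $\alpha$ whose two neighbours are $\alpha_1$ and $\alpha_2$ to another such slot — and it is not obvious that they act transitively on equal-count patterns, let alone that the moves can be sequenced without ever destroying the diversity needed for the next move. A plausible strategy is induction on a well-chosen distance (for example, the Hamming distance to $\bb^{\mathrm{can}}$ along a designated tail of the buffer), combined with a case analysis showing that each alteration can be carried out without disturbing the buffer segments that supply the subsequent moves. If alterations alone prove too rigid, one may interleave additional $(q+1)$-batch insertions (and their reverses, deletions in batches of $q+1$, which preserve colour by symmetry of item~(i)) to gain manoeuvring room, using the buffer as a workspace in which letters can be temporarily parked before being relocated.
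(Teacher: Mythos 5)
Your overall strategy---chain $\bp_1$ and $\bp_2$ to a common canonical pattern using the two colour-preserving moves of Lemma~\ref{lemma:alterationsinsertions}---is the same as the paper's, but the step you yourself flag as the main obstacle is precisely the content of the paper's proof, and you leave it unproved. One does not need (and the paper never establishes) that alterations act transitively on equal-count diverse patterns; instead the paper exhibits an explicit canonicalisation. Writing $\bp = \bx^{+}$ with core $\bx$ and buffer $(23)^{2k_0}(13)^{2k_0}(21)^{2k_0}231$, one repeatedly takes a movable letter of the core and moves it by an alteration into the left-most free slot of the appropriate buffer segment; the only stuck configuration is a nonempty core of the form $2121\dots21$ with no movable letter, which is handled by temporarily relocating the final $3$ of the buffer next to the leading $1$ and moving it back at the end. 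This empties the core and yields the normal form $1\,(213)^{\varphi_1}(23)^{\ast}(123)^{\varphi_2}(13)^{\ast}(231)^{\varphi_3}(21)^{\ast}\,231$ (unused slots intact), which depends only on $\varphi(\bx)$; deleting letters in batches of $q+1$ (the reverse of item (i)) then reduces the exponents modulo $q+1$ and produces a pattern depending only on $\varphi^{(q+1)}(\bp)$, so both $\bp_1$ and $\bp_2$ flow to the same vertex. Your proposal gestures at such a canonical representative but supplies neither its definition, nor the alteration sequence reaching it, nor a treatment of cores without movable letters; so the proof is incomplete at its crucial point.

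A secondary issue: your Stage 1 equalises counts by \emph{inserting} up to $(q+1)|d_\alpha|$ letters into the buffer. Under the standing assumption the buffered patterns may already be as long as permitted ($\mP^{+} \subset \mP(3,n-q)$ with $n$ as small as $7k_0+4+q$), so even one batch of $q+1$ insertions can push the length beyond $n$, and certainly beyond $n-q$, after which the intermediate patterns are no longer $\alpha$--diverse in the paper's sense and item (ii) of Lemma~\ref{lemma:alterationsinsertions} cannot be applied to them. The paper sidesteps this entirely by using only alterations (length-preserving) and batch deletions (length-decreasing). Your equalisation step could be salvaged by insisting on a larger $n$, which is harmless for Theorem~\ref{thm:main}, but as written it does not prove the lemma under the stated hypotheses---and in any case it does not remove the need for the missing canonicalisation argument.
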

\begin{proof}
	We will show that for any $\bp \in \mP^+$ there exists a sequences of diverse words
	\begin{equation*}
		\bp = \bb_1, \bb_2, \dots, \bb_{k_1} = \bc_1, \bc_2, \dots, \bc_{k_2}
	\end{equation*}
	such that $\bb_{i+1}$ can be obtained from $\bb_{i}$ for $1 \leq i < k_1$ by a single alteration and $\bc_{i}$ can be obtained from $\bc_{i+1}$ for $1 \leq i < k_2$ by exactly $q+1$ $\alpha$--insertions where the $\alpha \in [3]$ is allowed to depend on each step. We will also show that $\bc_{k_2}$ is determined by the reduced count of $\bp$, so that the two sequences obtained by starting at $\bp_1$ and $\bp_2$ terminate in the same pattern. By Lemma~\ref{lemma:alterationsinsertions}, it follows from this that $\chi (\bp_1) = \chi(\bp_2)$.
	
	We start by constructing the sequence $\bb_1, \dots, \bb_{k_1}$: let us refer to a copy of any letter $\alpha_3 \in \{1,2,3\}$ in a pattern as \emph{movable} if it is positioned between a copy of $\alpha_1$ and $\alpha_2$ where as usual $\{\alpha_1,\alpha_2,\alpha_3\} = \{1,2,3\}$. In the pattern $12321$ for example, both copies of the letter $2$ are movable whereas none of the others are. We let $\bx \in \mP(3,k_0)$ be the pattern for which
	\begin{equation*}
		\bp = \bx^{+} = 1 \, \mid \, \bx \, \mid \, (2\,\llcorner\!\lrcorner\,3)^{2k_0} \, (1\,\llcorner\!\lrcorner\,3)^{2k_0} \, (2\,\llcorner\!\lrcorner\,1)^{2k_0} \, 231.
	\end{equation*}
	Here both the bar as well as the symbol $\llcorner\!\lrcorner$ only serve as a visual aid to help us with the following definitions: we will refer to the part between the two bars -- that is initially $\bx$ -- as the \emph{core} and to the part to the right of the second bar -- that is initially $(2\,\llcorner\!\lrcorner\,3)^{2k_0} \, (1\,\llcorner\!\lrcorner\,3)^{2k_0} \, (2\,\llcorner\!\lrcorner\,1)^{2k_0} \, 231$ -- as the \emph{buffer}. We will refer to the spaces between the $12$s, $13$s and $21$s in the buffer marked by the symbol $\llcorner\!\lrcorner$ as \emph{slots}.
	
	 We now obtain $\bb_{i+1}$ from $\bb_{i}$ by choosing an arbitrary movable letter from the core and moving it by an alteration into an appropriate slot in the buffer. We do so in a canonical fashion by always moving a letter to the left-most available slot. The slot itself, with the corresponding symbol, gets removed. Note that the bars stay in place throughout this process and serve as the reference point for our notions of core and buffer, even as both change in length.
	 
	 We iterate this until there are no more movable letters in the core and refer to the point at which this happens as $k_1'$. Since $\bx$ is of length at most $k_0$, we note that we haven constructed the buffer large enough to not only contain all of $\bx$, but also large enough that all of the $\bb_i$ remain $\alpha$--diverse for any $\alpha \in [3]$. If the core of $\bb_{k_1'}$ is empty, then we set $k_1 = k_1'$. If however the core of $\bb_{k_1'}$ is non-empty, then we must have $\bb_{k_1'} = 1 \, \mid \, 2121 \dots 21 \, \mid \, 2 \dots 2{\bf 3}1$. In this case, we proceed by moving the last $3$ in $\bb_{k_1'}$ in front of the core so that $\bb_{k_1' +1 } = 1 \, {\bf 3} \, \mid \, 2121 \dots 21 \, \mid \, 2 \dots 21$. We observe that we are now able to recursively move all remaining letters from the core into the buffer until we reach $\bb_{k_1 - 1} = 1 \, {\bf 3} \mid \, \mid \, 2 \dots 21$. We finish by moving the $3$ back to its original position, so that
	 \begin{equation*}
	 	 \bb_{k_1} = 1 \mid \, \mid \, (213)^{\varphi_1} \, (2\,\llcorner\!\lrcorner\,3)^{2k-\varphi_1} \, (123)^{\varphi_2} \, (1\,\llcorner\!\lrcorner\,3)^{2k-\varphi_2} \, (231)^{\varphi_3} \, (2\,\llcorner\!\lrcorner\,1)^{2k-\varphi_3} \, 231
	 \end{equation*}
	where $(\varphi_1,\varphi_2,\varphi_3) = \varphi(\bx)$ refers to the count of $\bx$.
	
	We now proceed by obtaining $\bc_{i+1}$ from $\bc_{i}$ by removing, for each step, exactly $q+1$ of either the $1$s, $2$s or $3$s from, respectively, the parts $(213)$, $(123)$ or $(231)$. We can continue to do so until we reach
	\begin{equation*}
		\bc_{k_2} = 1 \mid \, \mid \, (213)^{\varphi^{(q+1)}_1} \, (2\,\llcorner\!\lrcorner\,3)^{2k-\varphi_1^{(q+1)}} \, (123)^{\varphi_2^{(q+1)}} \, (1\,\llcorner\!\lrcorner\,3)^{2k-\varphi_2^{(q+1)}} \, (231)^{\varphi_3^{(q+1)}} \, (2\,\llcorner\!\lrcorner\,1)^{2k-\varphi_3^{(q+1)}} \, 231.
	\end{equation*}
	where $(\varphi_1^{(q+1)},\varphi_2^{(q+1)},\varphi_3^{(q+1)}) = \varphi^{(q+1)}(\bx)$. We note that $\bc_{k_2}$ only depends on the reduced count of the original core $\bx$. Since the original buffer is identical for any core, it follows that $\bc_{k_2}$ also only depends on the reduced count of $\bp$ as desired.
\end{proof}
\begin{corollary} \label{cor:PtoC}
	If $\bC(n,q)$ is not $(q+1)$--colourable, then neither is $\bP(3,n,q)$.	
\end{corollary}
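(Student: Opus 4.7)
The plan is to argue by contraposition. Assume $\chi$ is a proper $(q+1)$-colouring of $\bP(3,n,q)$; I will construct a proper $(q+1)$-colouring $\chi'$ of $\bC(n,q)$. The natural candidate, guided by Lemma~\ref{lemma:countreduction}, is to set
\[
\chi'(v) \;:=\; \chi(\bp^+) \qquad \text{for any } \bp^+ \in \mP^+ \text{ with } \varphi^{(q+1)}(\bp) = v,
\]
for every $v \in \ZZ_{q+1}^3$ that arises as the reduced count of some buffered pattern, and arbitrarily otherwise. Lemma~\ref{lemma:countreduction} guarantees that the value does not depend on the representative $\bp^+$, so $\chi'$ is well defined.

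The second step is to verify that $\chi'$ is proper. By the definition of $\bC(n,q)$, every hyperedge of $\bC(n,q)$ is the image under the map $\bx^+ \mapsto \varphi^{(q+1)}(\bx)$ of some hyperedge $\{\bx^+, \bu^+, \bv^+\}$ of $\bP(3,n,q)$ with $\bx^+, \bu^+, \bv^+ \in \mP^+$. Since $\chi$ is proper, the three values $\chi(\bx^+), \chi(\bu^+), \chi(\bv^+)$ are not all equal, and unwinding the definition of $\chi'$ the same holds for $\chi'(\varphi^{(q+1)}(\bx)), \chi'(\varphi^{(q+1)}(\bu)), \chi'(\varphi^{(q+1)}(\bv))$. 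Hence the induced hyperedge of $\bC(n,q)$ is not monochromatic.

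The only subtlety I anticipate is bookkeeping: the image hyperedge in $\bC(n,q)$ may have size less than three if some of the reduced counts $\varphi^{(q+1)}(\bx), \varphi^{(q+1)}(\bu), \varphi^{(q+1)}(\bv)$ collide. It is important to observe that it cannot collapse to a single vertex, for otherwise all three reduced counts would agree and Lemma~\ref{lemma:countreduction} would force $\chi(\bx^+) = \chi(\bu^+) = \chi(\bv^+)$, contradicting that $\chi$ is proper on $\bP(3,n,q)$. So every hyperedge of $\bC(n,q)$ has at least two distinct vertices on which $\chi'$ takes distinct values, and properness follows. Since the substantive content is already contained in Lemma~\ref{lemma:countreduction}, the corollary is essentially a formal repackaging and no genuine obstacle is expected.
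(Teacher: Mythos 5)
Your proof is correct and follows exactly the intended route: the paper leaves Corollary~\ref{cor:PtoC} without an explicit argument because it is meant to be read off directly from Lemma~\ref{lemma:countreduction} together with the definition of $\bC(n,q)$, which is precisely what you do. Your extra observation that a hyperedge of $\bP(3,n,q)$ cannot collapse to a single vertex of $\bC(n,q)$ (since by Lemma~\ref{lemma:countreduction} that would force a monochromatic edge in $\bP(3,n,q)$) is a sound and worthwhile clarification that the paper glosses over.
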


We conclude this subsection by establishing the structure of some of the edges that can be found in $\bC(n,q)$. In fact, these will be essentially almost all of the edges that can be found in $\bC(n,q)$, though we do not provide a proof of this. Recall that in $\bC(n,q)$ the point associated with $\overline{\ell[\alpha]}^+ \in \mP^+ $ is labelled with the reduced count of $\overline{\ell[\alpha]}$ for any $\alpha \in [3]$. Let $\be_1 = (1,0,0)$, $\be_2 = (0,1,0)$ and $\be_3 = (0,0,1)$ denote the indicator vectors in $\NN^3$.

\begin{lemma} \label{lemma:hyperedges_C}
	Given any $\bx \in \ZZ_{q+1}^3$ as well as any $a_1,a_2,a_3 \in \ZZ$ satisfying $0 < a_1 + a_2 + a_3 \leq q$ and $a_i + a_j \geq 0$ for $i \neq j$, the set $\{ \bx + a_1 \, \be_1, \bx + a_2 \, \be_2, \bx + a_3 \, \be_3\}$ forms an edge in $\bC(n,q)$
	, where addition is modulo $q+1$.
\end{lemma}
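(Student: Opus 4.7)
\medskip

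\textbf{Proof plan.} For each prescribed $\bx$ and $(a_1,a_2,a_3)$ satisfying the hypothesis, we build a combinatorial line $\bell \in \mP_\star(3,n)$ with at most $q$ occurrences of the symbol $\star$ whose three contractions $\overline{\bell[1]}, \overline{\bell[2]}, \overline{\bell[3]}$ are buffered patterns with reduced counts $\bx+a_1\be_1$, $\bx+a_2\be_2$ and $\bx+a_3\be_3$ respectively. By Lemma~\ref{lemma:hyperedges_P}, such a line $\bell$ produces a hyperedge of $\bP(3,n,q)$, which descends to the required edge of $\bC(n,q)$. Write $\bell = 1\,\bs\,(23)^{2k_0}\,(13)^{2k_0}\,(21)^{2k_0}\,231$, where $\bs$ is a short pattern (containing stars) to be constructed. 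Provided $|\bs|\le k_0$, the first letter of $\bs$ is not $1$ and the last letter of $\bs$ is not $2$, each contraction $\overline{\bell[\alpha]}$ is automatically of the form $\bx_\alpha^+$ with $\bx_\alpha = \overline{\bs[\alpha]}$, so the task reduces to controlling $\varphi(\overline{\bs[\alpha]}) \pmod{q+1}$.

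\medskip

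The crux is a local analysis of how each $\star$ in $\bs$ affects the three counts. A $\star$ flanked in $\bs$ by two distinct letters $\alpha\neq\beta$ of $[3]$ (with third letter $\gamma$) contributes $+\be_\gamma$ to $\varphi(\overline{\bell[\gamma]})$ and nothing to $\varphi(\overline{\bell[\alpha]})$ or $\varphi(\overline{\bell[\beta]})$, since $\alpha\alpha\beta$ and $\alpha\beta\beta$ both contract to $\alpha\beta$. A $\star$ flanked by two copies of the same letter $\alpha$ contributes $0$ to $\varphi(\overline{\bell[\alpha]})$ (as $\alpha\alpha\alpha$ contracts to $\alpha$) but $\be_\alpha+\be_\beta$ and $\be_\alpha+\be_\gamma$ to the counts of $\overline{\bell[\beta]}$ and $\overline{\bell[\gamma]}$, respectively. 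Since $a_i+a_j\ge 0$ for every pair, at most one $a_i$ is negative. If all $a_i\ge 0$, place $a_1$ stars of type ``$2\star3$'', $a_2$ of type ``$1\star3$'' and $a_3$ of type ``$1\star2$'' inside $\bs$. If instead $a_3 = -k < 0$ (so that $a_1, a_2 \ge k$; the cases $a_1<0$ or $a_2<0$ are analogous using ``$1\star1$''- or ``$2\star2$''-stars), use $k$ stars of type ``$3\star3$'' together with $a_1-k$ of type ``$2\star3$'' and $a_2-k$ of type ``$1\star3$''. In either case the total star count equals $a_1+a_2+a_3 \le q$, and summing the local contributions yields $\varphi(\overline{\bell[i]}) - \varphi(\overline{\bell[j]}) \equiv a_i\be_i - a_j\be_j \pmod{q+1}$ for every pair $i\ne j$.

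\medskip

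To match $\bx$ absolutely, we pad the non-star skeleton of $\bs$ with additional letters that do not neighbour any $\star$: each such insertion (or boundary prepension/append) increases exactly one coordinate of $\varphi(\overline{\bs[\alpha]})$ by $1$ for every $\alpha$ simultaneously and is independent of the star contributions. Realising any prescribed residue in $\ZZ_{q+1}^3$ then requires at most $O(q)$ such letters, so combined with the $\le q$ stars this yields $|\bs| = O(q) \le k_0 = 10q+6$. The only remaining obstacle is bookkeeping: arranging the stars, separators and adjustment letters as a single valid pattern of $\mP_\star$ whose first letter is not $1$ and whose last letter is not $2$. Thanks to the ample slack in $k_0$ this is routine—for example, consecutive ``$\alpha\star\alpha$''-stars are interleaved with non-$\alpha$ letters as separators, and the two boundaries of $\bs$ are adjusted with a few extra letters.
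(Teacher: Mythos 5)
Your construction follows essentially the same route as the paper: build an explicit line with at most $q$ stars, use stars flanked by two distinct letters to add $\be_\gamma$ to exactly one of the three substitutions, use stars flanked by two equal letters to handle the (at most one) negative $a_i$, buffer the result, and invoke Lemma~\ref{lemma:hyperedges_P}. The local count analysis and the pairwise conclusion $\varphi(\overline{\bell[i]})-\varphi(\overline{\bell[j]})\equiv a_i\be_i-a_j\be_j$ are correct, and the case split (all $a_i\ge 0$ versus one negative, using $a_i+a_j\ge 0$) matches the paper's Case 1/Case 2.

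The gap is in the final step, which you dismiss as bookkeeping but which is exactly where the lemma has real constraints. A vertex of $\bC(n,q)$ is by definition a buffered pattern whose core lies in $\mP(3,k_0)$ with $k_0=10q+6$ \emph{fixed in advance}, so the assertion ``$|\bs|=O(q)\le k_0=10q+6$'' is not a proof: an unspecified constant does not land under a prescribed one, and a naive implementation (star blocks followed by an adjustment gadget of the shape $(213)^{d_1}(23)^{q+1-d_1}(123)^{d_2}(13)^{q+1-d_2}(231)^{d_3}(21)^{q+1-d_3}$) already costs up to roughly $3q+9q$ letters, exceeding $k_0$. Moreover, the padding mechanism as you describe it — single insertions, each raising exactly one coordinate by $1$ — cannot by itself realise an arbitrary residue such as $+d\,\be_1$ with $d\ge 2$: inside a ternary pattern the only letter insertable between two distinct letters is the third one, and consecutive equal letters are forbidden, so isolated adjustments of one coordinate force you to add $q+1$ extra copies of other letters. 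This mod-$(q+1)$ inflation is precisely what makes the length accounting nontrivial, and it is why the paper interleaves the counts into blocks like $(2\star3)^{a_1}(213)^{x_1}(23)^{q+1-x_1-a_1}\dots$, whose total length is checked to be at most $10q+6$. Your argument is salvageable — a careful implementation of your padding (inflating at most two coordinates by $q+1$ gives an adjustment of length at most about $4q$, plus at most $3q$ for the star blocks and a couple of boundary letters) does fit under $k_0$, as do the first-letter-$\neq 1$ / last-letter-$\neq 2$ constraints — but as written the absolute matching and the bound $|\bs|\le k_0$ are asserted rather than proved, and they must be proved for the constructed patterns to be vertices of $\bC(n,q)$ at all.
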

\begin{proof}
	Write $\bx = (x_1,x_2,x_3)$ where $0 \leq x_1,x_2,x_3 \leq q$ are treated as integers. We note that we must have either $a_1,a_2,a_3 \geq 0$ or $a_{i_2}, a_{i_3} \geq -a_{i_1}>0$ for $\{i_1,i_2,i_3\} = \{1,2,3\}$. We will distinguish between these two cases.
	
	\medskip
	
	\noindent {\bf Case 1. } Assume that $a_1,a_2,a_3 \geq 0$. Consider 
	\begin{align*}
		\bell = \bell(x_1,x_2,x_3;a_1,a_2,a_3) =
		& \; (2 \star 3)^{a_1} \, (213)^{x_1} \, (23)^{q+1 - x_1 - a_1} \dots \\
		& \; (1 \star 3)^{a_2} \, (123)^{x_2} \, (13)^{q+1 - x_2 - a_2} \dots \\
		& \; (2 \star 1)^{a_3} \, (231)^{x_3} \, (21)^{q+1 - x_3 - a_3} \: \in \mP_{\star}(3,k_0).
	\end{align*}
	Here the dots merely indicate that the word is continued in the next line. It is easy to verify that 
	\begin{align*}
		\varphi^{(q+1)} \big( \overline{\bell[1]} \big) & = (x_1+a_1,x_2,x_3) = \bx + a_1 \, \be_1, 
	\end{align*}
	where all addition is modulo $q+1$. Similarly, $	\varphi^{(q+1)} \big( \overline{\bell[i]} \big)  = \bx + a_i \, \be_i
		$ for $i \in \{2, 3\}$. We note that $\ell$ is of length at most $10q + 6 = k_0$ so that $\bell^+ \in \mP_{\star}(3,n)$ and hence by Lemma~\ref{lemma:hyperedges_P} $\{ \bell^+[1], \bell^+[2], \bell^+[3] \}$ constitutes an edge in $\bP(3,n,q)$ so that $\{ \bx + a_1 \, \be_1, \bx + a_2 \, \be_2, \bx + a_3 \, \be_3 \}$ is an edge in $\bC(n,q)$. 
	
	\medskip
	
	\noindent {\bf Case 2. } Assume that $a_1 < 0$ and $a_2,a_3 \geq |a_1|$. The other cases follow likewise. Consider
	\begin{align*}
		\bell = \bell(x_1,x_2,x_3;a_1,a_2,a_3) = &
		  \; (1 \star 1)^{|a_1|} \, (213)^{x_1} \, (23)^{q+1 - x_1 + |a_1|} \dots \\
		& \; (1 \star 3)^{a_2 - |a_1|} \, (123)^{x_2} \, (13)^{q+1 - x_2 - a_2} \dots \\
		& \; (2 \star 1)^{a_3 - |a_1|} \, (231)^{x_3} \, (21)^{q+1 - x_3 - a_3} \: \in \mP_{\star}(3,k_0).
	\end{align*}
	It is again easy to verify that  $	\varphi^{(q+1)} \big( \overline{\bell[i]} \big)  = \bx + a_i \, \be_i
		$ for $i \in [3]$
	%
	%
	As before we conclude that $\{ \bx + a_1 \, \be_1, \bx + a_2 \, \be_2, \bx + a_3 \, \be_3 \}$ forms an edge in $\bC(n,q)$.
\end{proof}
The edges described in the following easy corollary form a {\lq}Latin cube{\rq}--type structure in $\bC(n,q)$, that is $\{ \bx, \bx + \be_i, \bx + 2 \, \be_i, \dots, \bx + q \, \be_i \}$ form a clique of order $q+1$ for any $\bx \in \ZZ_{q+1}^3$ and $i \in \{1,2,3\}$.
\begin{corollary} \label{cor:edges_C}
	For any $\bx \in \ZZ_{q+1}^3$, $i \in \{1,2,3\}$ and $a \in \{1,\dots,q\}$ there is a edge between the vertices $\bx$ and $\bx + a \, \be_i$ in $\bC(n,q)$ where addition is modulo $q+1$. Therefore, if $\chi$ is a proper $(q+1)$--colouring of $\bC(n,q)$, then for any $\bx$ and $\be_i$, each colour occurs exactly once in the clique $\{ \chi(\bx), \chi(\bx + \be_i), \chi(\bx + 2 \, \be_i), \dots, \chi(\bx + q \, \be_i )\}$.
\end{corollary}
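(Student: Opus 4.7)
The plan is to derive the first claim as an immediate specialisation of Lemma~\ref{lemma:hyperedges_C}, and then deduce the clique structure and the colouring assertion from it. Fix $\bx \in \ZZ_{q+1}^3$ and, without loss of generality, take $i = 1$; the cases $i \in \{2, 3\}$ follow by symmetry, since the three coordinates enter Lemma~\ref{lemma:hyperedges_C} in an entirely symmetric fashion. In that lemma I set $a_1 = a$ and $a_2 = a_3 = 0$. The hypotheses $0 < a_1 + a_2 + a_3 = a \leq q$ and $a_j + a_k \geq 0$ for $j \neq k$ are trivially met, so the lemma supplies a hyperedge
\[
\{\bx + a\,\be_1,\ \bx + 0\cdot\be_2,\ \bx + 0\cdot\be_3\} \;=\; \{\bx,\ \bx + a\,\be_1\}
\]
in $\bC(n,q)$, which has collapsed to a size-two edge because two of the three image vertices coincide.

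For the clique assertion, note that for any $0 \leq j < k \leq q$ one has $\bx + k\,\be_i = (\bx + j\,\be_i) + (k-j)\,\be_i$ with $k-j \in \{1,\dots,q\}$. Applying the first part (with $\bx + j\,\be_i$ in place of $\bx$) therefore produces an edge between $\bx + j\,\be_i$ and $\bx + k\,\be_i$, so the set $\{\bx,\ \bx + \be_i,\ \dots,\ \bx + q\,\be_i\}$ spans a clique of order $q+1$ in $\bC(n,q)$. For any proper $(q+1)$-colouring $\chi$ of $\bC(n,q)$, the endpoints of any size-two edge receive distinct colours, and consequently these $q+1$ vertices receive pairwise distinct colours. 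Since only $q+1$ colours are available, each colour appears exactly once in the clique $\{\chi(\bx),\chi(\bx + \be_i),\dots,\chi(\bx + q\,\be_i)\}$.

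The only substantive point is being comfortable with the convention that the size-three hyperedge guaranteed by Lemma~\ref{lemma:hyperedges_C} legitimately collapses to a size-two edge when two of the three image vertices happen to coincide. This is entirely consistent with how $\bC(n,q)$ inherits its edge set from $\bP(3,n,q)$, which in turn was already set up (in Lemma~\ref{lemma:edges_P}) to include size-two edges arising from combinatorial lines in $[3]^n$ whose words contract to only two distinct patterns. Once this is granted, the corollary follows from a purely routine choice of parameters and no further combinatorial work is required; this is why I would expect the main use of the corollary (in the chromatic-number lower bound for odd $q$) to be where the real difficulty lies, rather than in its own proof.
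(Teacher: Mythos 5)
Your proof is correct and matches the paper's intent exactly: the paper offers no separate argument for this corollary, treating it as the immediate specialisation of Lemma~\ref{lemma:hyperedges_C} with $a_1 = a$, $a_2 = a_3 = 0$ (yielding a size-two edge by coincidence of vertices), from which the clique and the colouring statement follow just as you describe. Your remark that size-two edges are legitimate in $\bC(n,q)$, inherited from order-two edges in $\bP(3,n,q)$, is consistent with the paper's discussion following Lemma~\ref{lemma:hyperedges_P}.
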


\subsection{A lower bound on the chromatic number of $\bC(n,q)$}

Throughout this section we will continue to assume that $n \geq 7k_0 + 4 + q$ and simply write $\bC_q = \bC(n,q)$. The vertex set of $\bC_q$ is $\ZZ_{q+1}^3$ and the edges of $\bC_q$ that we will use are described in Lemma~\ref{lemma:hyperedges_C} and Corollary~\ref{cor:edges_C}. As already noted, $\bC_q$ contains plenty of cliques of order $q+1$ so that $\chi (\bC_q) \geq q+1$. We know that this bound is sharp for even $q$, but we wish to show the following:

\begin{proposition} \label{prop:main}
	For odd $q$ we have $\chi(\bC_q) > q+1$.
\end{proposition}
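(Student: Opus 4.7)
My plan is to argue by contradiction: assume $f\colon \ZZ_{q+1}^{3}\to\ZZ_{q+1}$ is a proper $(q+1)$--colouring of $\bC_{q}$ for some odd $q$. By Corollary~\ref{cor:edges_C}, every axis-parallel line receives all $q+1$ colours exactly once, so $f$ is a \emph{Latin cube} of order $q+1$; equivalently, each colour class $f^{-1}(c)$ meets every two-dimensional axis-parallel slice in a permutation matrix.

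The oddness of $q$ is exploited through the integer $m := (q+1)/2$, which satisfies the key identity $-m \equiv m \pmod{q+1}$. Applying Lemma~\ref{lemma:hyperedges_C} with the parameter triple $(a_1,a_2,a_3) = (m,m,-m)$---valid since the sum $m$ lies in $(0,q]$ and the pairwise sums $2m = q+1$, $0$, $0$ are all non-negative---together with the identity $-m \equiv m$ yields the edge
\[
\bigl\{\,\bx + m\be_1,\; \bx + m\be_2,\; \bx + m\be_3\,\bigr\}
\]
of $\bC_{q}$ for every $\bx \in \ZZ_{q+1}^{3}$. None of these ``diagonal'' corner triples may be monochromatic under $f$.

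The next step is to localise to the subgroup $V := \{0,m\}^{3} \subset \ZZ_{q+1}^{3}$, identified with $\ZZ_{2}^{3}$ via $0 \leftrightarrow 0$, $m \leftrightarrow 1$. Under this identification the diagonal edges above are exactly the corner triples $\{\bx+\be_1,\bx+\be_2,\bx+\be_3\}$ in $\ZZ_{2}^{3}$, and Corollary~\ref{cor:edges_C} applied to the $\ZZ_{q+1}$-line through any two vertices of $V$ forces $f$ to take distinct values on any two vertices of $V$ differing in a single coordinate. This is structurally the $q=1$ setup of Leader and R\"aty: there, any map $\ZZ_{2}^{3}\to\ZZ_{2}$ respecting the line condition is of the form $\bx \mapsto c + x_1+x_2+x_3 \pmod 2$, and every corner triple under such a map is monochromatic.

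The main obstacle is that for $q > 1$ the line condition restricted to $V$ is strictly weaker than the two-colour constraint available when $q = 1$: the function $f|_{V}$ may take up to $q+1$ different values on the eight vertices of $V$ and so avoid all monochromatic corner triples on its own. To close this gap I would invoke further edges from Lemma~\ref{lemma:hyperedges_C}, in particular $(a,a,-a)$ with $1 \leq a < m$ based at points $\bx$ whose two unshifted coordinates already lie in $\{0,m\}$, and exploit the Latin-cube orthogonality along the axis lines involved. The target is to propagate a rigid congruence of the form $f(\bx) \equiv c + m(x_1+x_2+x_3) \pmod{q+1}$ on all of $V$, which collapses $f|_{V}$ to the two values $\{c,c+m\}$ and forces every corner triple inside $V$ to be monochromatic, contradicting the second step. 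This propagation is the substantive part of the proof, and it is precisely where the parity of $q$ must be used (so the argument breaks down, as it must, for even $q$).
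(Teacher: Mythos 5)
Your set-up steps are sound: the triple $(a_1,a_2,a_3)=(m,m,-m)$ with $m=(q+1)/2$ does satisfy the hypotheses of Lemma~\ref{lemma:hyperedges_C}, so the ``diagonal'' edges $\{\bx+m\be_1,\bx+m\be_2,\bx+m\be_3\}$ exist exactly when $q$ is odd, and Corollary~\ref{cor:edges_C} does force $f$ to be a Latin cube and to be rainbow on the subcube $V=\{0,m\}^3$ along coordinate directions. But the proof stops precisely where the actual difficulty begins. Your final paragraph announces a ``target'' congruence $f(\bx)\equiv c+m(x_1+x_2+x_3)$ on $V$ and says you ``would invoke further edges'' and ``exploit Latin-cube orthogonality'' to propagate it; no such propagation is carried out, and it is not at all clear that the purely local constraints you list (edges of type $(a,a,-a)$ based near $V$, plus the Latin-cube condition on the axis lines through $V$) are strong enough to force it. As it stands you have only shown that $f|_V$ is a proper colouring of a small hypergraph on eight vertices with $q+1$ colours available, which, as you yourself note, is not a contradiction for $q>1$. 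So this is a genuine gap, not a routine verification: the missing step is the heart of the proposition.

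For comparison, the paper closes this gap by a global rigidity argument rather than a local one. Lemma~\ref{lemma:ind} is a lengthy double induction (on $d=\max(b,a+b)$, and internally on $s_0$, via Claim~\ref{claim:zred}) showing that if a single colour class contains two points in the relative position $-a\,\be_{i_a}+b\,\be_{i_b}$, then it contains the entire rank-two sublattice generated by $-(a+b)\be_{i_0}-b\be_{i_b}$ and $-a\be_{i_a}+b\be_{i_b}$ through that point; this is where almost all of the work lies, it uses the full family of hyperedges from Lemma~\ref{lemma:hyperedges_C} across all of $\ZZ_{q+1}^3$, and it holds for every $q$. Parity enters only at the very end: a pigeonhole application of Corollary~\ref{cor:edges_C} gives $\chi(j_0\,\be_2)=\chi(-\be_1)$ for some $j_0$, and comparing the sizes of the projections of the resulting monochromatic sublattice onto the three axes forces $\gcd(j_0,q+1)=\gcd(j_0-1,q+1)=1$, impossible when $q+1$ is even. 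Your sketch, by contrast, places the use of parity inside the (unproven) propagation step and confines attention to an eight-point subcube; to make it work you would in effect have to reprove something of the strength of Lemma~\ref{lemma:ind}, so the proposal as written does not constitute a proof.
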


We will prove the proposition by considering a single colour, say `red' which is assumed to induce no hyperedges of $\bC_q$. We will show that the red set is determined by only two vertices and in  fact comes from the zero set of a linear functional. We show this using an inductive argument. Implicitly,~\cite{CK18} have shown that $\chi(C_q) \leq q+1$ for odd $q$ using exactly this type of colouring. 

\begin{lemma} \label{lemma:ind}
	Let $\chi$ be a proper $q+1$--colouring of $\bC_q$. Let $a  \in \ZZ$ and $b  \in \NN_0$ such that $|a | \leq b $ as well as $\max(b ,a +b ) \leq q$ and let $\{i_a ,i_b ,i_0  \}= \{1,2,3\}$. If there exists $\bx  \in \ZZ_{q+1}^3$ such that 
	\begin{equation}
		\chi(\bx ) = \chi(\bx  - a  \, \be_{i_a } + b  \, \be_{i_b })
	\end{equation}
	then we must have 
	\begin{equation} \label{eq:colour_move}
		\chi \Big( \bx  + s_0 \, \big(- (a +b ) \, e_{i_0} -b  \, e_{i_b } \big) + s_1 \, \big(-a  \, e_{i_a } + b  \, e_{i_b }\big) \Big) = \chi(\bx)
	\end{equation}
	for any $s_0, s_1 \in \ZZ$. Here all addition is modulo $q+1$.
\end{lemma}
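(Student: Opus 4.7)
Write $\bv_0 := -(a+b)\,\be_{i_0} - b\,\be_{i_b}$ and $\bv_1 := -a\,\be_{i_a} + b\,\be_{i_b}$, so the lemma asserts that every point of the coset $\bx + (\ZZ\bv_0 + \ZZ\bv_1)$ lies in the $\chi(\bx)$--colour class. My plan is a two--step propagation: first produce the red vertex $\bx + \bv_0 + \bv_1$ from the hypothesised red pair $\{\bx,\by\}$ (where $\by := \bx + \bv_1$), then produce $\bx + \bv_0$ from the pair $\{\bx, \bx + \bv_0 + \bv_1\}$. Once both $\bx + \bv_0$ and $\bx + \bv_1$ are red, the lemma's hypothesis will hold simultaneously in both generating directions at $\bx$, and iterating the first--step argument at successive translates (together with sign--reversal via the Latin--cube permutation symmetry) will fill out the entire coset.

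For the first step I would invoke Lemma~\ref{lemma:hyperedges_C} with base $\bw := \bx - a\,\be_{i_a}$ and integer offsets $(a_{i_0},a_{i_a},a_{i_b}) = (\gamma, a, b)$; the pairwise and total--sum constraints of the lemma reduce to $\gamma \in \{\max(-a,-b),\ldots,q-a-b\}$ with $\gamma + a + b > 0$. Each resulting hyperedge $\{\bx,\by,\bz_\gamma\}$ with $\bz_\gamma := \bw+\gamma\,\be_{i_0}$ cannot be monochromatic, so the hypothesis $\chi(\bx)=\chi(\by)$ forces $\chi(\bz_\gamma) \neq \chi(\bx)$ for every valid integer $\gamma$. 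Accumulating these exclusions over all integer representations $(a_{i_a},a_{i_b}) \equiv (a,b) \pmod{q+1}$ that still satisfy the constraints of Lemma~\ref{lemma:hyperedges_C}, one aims to cover every residue of $t$ on the axis--aligned line $L := \{\bw+t\,\be_{i_0} : t \in \ZZ_{q+1}\}$ except $t \equiv -(a+b) \pmod{q+1}$. Since Corollary~\ref{cor:edges_C} guarantees that $\chi$ is a bijection on $L$ and so $L$ contains exactly one red vertex, that vertex must be $\bw - (a+b)\,\be_{i_0} = \bx + \bv_0 + \bv_1$.

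For the second step I would repeat the hyperedge--plus--Latin--cube argument on the new red pair $\{\bx, \bx + \bv_0 + \bv_1\}$. The displacement $\bv_0 + \bv_1 = -(a+b)\,\be_{i_0} - a\,\be_{i_a}$ lies in the $(i_0,i_a)$--plane, and matches the hypothesis form $-a'\,\be_{i_a'} + b'\,\be_{i_b'}$ under the relabelling $(i_0',i_a',i_b') = (i_b, i_a, i_0)$ with $(a',b') = (-a, a+b)$ when $a \geq 0$ (with $|a'|\leq b'$ and $\max(b',a'+b')\leq q$ both following from the original assumptions); the case $a<0$ uses an analogous relabelling. Running the first--step argument for this new configuration identifies a further red vertex which, upon unwinding the relabelling, equals $\bx + \bv_0$, as desired.

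The main obstacle I foresee is the pin--down step of the first--step argument: ensuring that all residues of $t$ on $L$ save $-(a+b)$ are forbidden. For parameters where $b$ is comparable to $q$, the single family of offsets $(a_{i_a}, a_{i_b}) = (a, b)$ at $\bw$ rules out only a contiguous arc of $q - b + 1$ residues, leaving $b$ candidates; closing this gap requires a careful exploration of the alternative integer lifts of $(a, b)$ modulo $q+1$ allowed by Lemma~\ref{lemma:hyperedges_C} and, where these are insufficient, auxiliary hyperedges through $\bx$ or $\by$ supplying extra constraints. This pin--down splits naturally on the sign of $a$, paralleling the two--case structure already present in the proof of Lemma~\ref{lemma:hyperedges_C}, and is the most delicate part of the argument.
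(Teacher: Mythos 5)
Your high-level architecture matches the paper's: first establish a single--step propagation (that $\bz := \bx + \bv_0 + \bv_1 = \bx - a\be_{i_a} - (a+b)\be_{i_0}$ is red, given that $\bx$ and $\by = \bx + \bv_1$ are red), and then iterate this to fill out the coset. The paper's Claim~\ref{claim:zred} is exactly your ``first step,'' and your pin--down strategy (rule out every residue on the axis line $L$ through $\bc = \bx - a\be_{i_a}$ in direction $\be_{i_0}$ except $-(a+b)$, then invoke the Latin--cube structure from Corollary~\ref{cor:edges_C}) is also the paper's strategy. You also correctly isolate the danger zone: when $b$ is close to $q$, the hyperedges through $\{\bx,\by\}$ from Lemma~\ref{lemma:hyperedges_C} leave a long arc of roughly $a+b$ residues near $-(a+b)$ unexcluded.

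However, the tools you propose for closing that arc will not work, and this is a genuine gap. The ``alternative integer lifts'' of $(a,b)$ modulo $q+1$ give nothing new when $a>0$: shifting $a_{i_a}$ or $a_{i_b}$ by $\pm(q+1)$ violates either the pairwise nonnegativity constraints $a_i + a_j \geq 0$ or the total--sum constraint $0 < \sum a_i \leq q$, using $a+b \leq q$ and $b\geq 0$; for $a<0$ a single extra lift does become admissible, but it supplies only $|a|$ additional exclusions and $b - |a|$ residues remain uncovered. Likewise, ``auxiliary hyperedges through $\bx$ or $\by$'' cannot exclude a candidate $\bz_\gamma\in L$, because any such hyperedge has a third vertex whose colour is not known to be red, so no constraint on $\bz_\gamma$ follows. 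What the paper actually does to close the arc is quite different: the whole of Lemma~\ref{lemma:ind} is proved by induction on $d = \max(b,a+b)$, and the ``first step'' (Claim~\ref{claim:zred}) is allowed to invoke the full inductive hypothesis at smaller $d$. Concretely, if some forbidden $\bz_\gamma$ with $-(a+b)<\gamma<0$ were red, then the red pair $\{\bz_\gamma,\bx\}$ (or $\{\bz_\gamma,\by\}$) has a smaller value of $d$, so the inductive hypothesis lets one propagate from that pair to a second red vertex on an axis line already carrying a known red vertex, contradicting the Latin--cube property of Corollary~\ref{cor:edges_C}. Without this outer induction on $d$ --- which you do not mention at all --- the pin--down step cannot be completed. (There is also a separate boundary case $-a=b=d$ which cannot be handled as a direct special case of the argument and which the paper treats last, relying on the statement having already been proved for all other $(a,b)$ with the same $d$; your sketch does not address it.)
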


\begin{proof}

	Let us highlight two special cases of~\eqref{eq:colour_move} that will be needed throughout the proof:
	\begin{enumerate} \setlength\itemsep{0em}
	    \item \label{item:p11} Equation~\eqref{eq:colour_move} with $(s_0, s_1) = (1, 1)$ reads $\chi \big( \bx   - (a +b ) \, \be_{i_0 } -a \, \be_{i_a } \big) = \chi(\bx )$.
		\item \label{item:p-10}Equation~\eqref{eq:colour_move} with $(s_0, s_1) = (-1, 0)$ reads $\chi \big( \bx  +  (a +b ) \, \be_{i_0 } + b \, \be_{i_b } \big) = \chi(\bx )$.
	\end{enumerate}
	%
	%
	We now prove the statement by induction on	
	\begin{equation}
		d = \max(b , a +b ) = \left\{\begin{array}{ll} a +b   & \text{if } a  \geq 0 \\ b  & \text{if } a  < 0 \end{array}\right..
	\end{equation}
	Note that by assumption $0 \leq d \leq q$. For $d = 0$ we must have $a = b = 0$, for which the statement is trivially true. We therefore assume that the inductive hypothesis holds for $0,1, \dots, d-1 < q$ and prove it for $d$. 
	
 	The case $a = 0$ immediately leads to a contradiction since $\bx$ and $\bx + b \, \be_{i_b}$ are adjacent by Corollary~\ref{cor:edges_C}. The case where $a+b = 0$, that is $a = -d$ and $b = d$, will be argued separately at the end as it relies on previously having proven this statement for all other cases. Let us therefore look at the case $a+b > 0$. We start by proving the special case $(s_0,s_1) = (1,1)$ of equation~\eqref{eq:colour_move}.
	\begin{claim} \label{claim:zred}
	    Let $a' \in \ZZ \setminus \{0\}$ and $b'\in \NN$ satisfy $b' \geq a'> -b'$ as well as $\max(b', a' +b') = d$. If there are $\{ i_a',i_b',i_0' \}= \{1,2,3\}$ and $\bx' \in \ZZ_{q+1}^3$ such that
    	\begin{equation}
    		\chi(\bx') = \chi(\bx' - a'\, \be_{i_a'} + b'\, \be_{i_b'}),
    	\end{equation}
		then
    	\begin{equation}
    	    \chi(\bx' - a'\, \be_{i_a'} - (a' +b')\,\be_{i_0'}) = \chi (\bx').
    	\end{equation}
	\end{claim}
    \begin{proof}[Proof of Claim~\ref{claim:zred}]
    	For the remainder of the proof we simply say that the colour of $\bx'$ is red. We write 
		\begin{align*}
			\bc = \bx' - a'\, \be_{i_a'}, \quad \by' = \bc + b'\, \be_{i_b'}, \quad \text{and} \quad \bz = \bc - (a' +b') \, \be_{i_0'}.
		\end{align*}
		Recall that the aim is to show that $\bz$ is red. Before proceeding with a case distinction, we note that $\bc$ cannot be red since it is adjacent to both $\bx'$ and $\by'$ by Corollary~\ref{cor:edges_C}

		\medskip \noindent {\bf Case~1.} Assume that $a'> 0$. Let us show that out of the vertices $\bc + j \, \be_{i_0'}$ where $-(a'+b') \leq j \leq q-(a'+b')$ only $\bz$,  corresponding to $j = -(a'+b') $, \emph{can} be red and therefore in fact \emph{must} be red by Corollary~\ref{cor:edges_C}. We will do so through a case distinction illustrated in Figure~\ref{fig:case1}.
		
		\begin{figure}[h]
		\begin{center}
		\scalebox{1.15}{
		\begin{tikzpicture}
			\begin{scope}
				\draw [thick, decoration={brace, mirror, raise=0.3cm }, decorate] (0.02,0) -- (4.98,0) node [pos=0.5,anchor=north,yshift=-0.4cm] {\footnotesize Case 1.1};
				\draw [thick, decoration={brace, mirror, raise=0.3cm }, decorate] (-1.98,0) -- (-0.02,0) node [pos=0.5,anchor=north,yshift=-0.4cm] {\footnotesize Case 1.3}; 
				\draw [thick, decoration={brace, mirror, raise=0.3cm }, decorate] (-2.98,0) -- (-2.02,0) node [pos=0.5,anchor=north,yshift=-0.4cm] {\footnotesize Case 1.2}; 
				\draw[black, line width=0.4mm] (0,0) -- (0.71,0.71);
				\draw[black, line width=0.4mm] (0,0) -- (0,2);
				\draw[black, line width=0.4mm] (0,0) -- (-3,0);
				\draw[black, dotted, line width=0.4mm] (-4,0) -- (-3,0);
				\draw[black, dotted, line width=0.4mm] (5,0) -- (6,0);
				\draw[black, line width=0.4mm] (0,0) -- (5,0);
				\node[inner sep=0pt,minimum size=5pt,label={\tiny $a'$}] (a) at (0.3,0.3) {};
				\node[inner sep=0pt,minimum size=5pt,label={\tiny $b'$}] (a) at (-0.2,0.9) {};
				\node[inner sep=0pt,minimum size=5pt,label=above:{\tiny $-(a'+b')$}] (a) at (-1.7,-0.1) {};
				
				\node[inner sep=0pt,minimum size=5pt,label=above:{\tiny $(q+1)-(a'+b')$}] (a) at (2.7,-0.1) {};
				\node[circle,fill=black,inner sep=0pt,minimum size=5pt,label={[xshift=-0.3cm]$\bc$}] (a) at (0,0) {};
				\node[circle,fill=black,inner sep=0pt,minimum size=5pt,label={[xshift=0.3cm]$\bx'$}] (a) at (0.71,0.71) {};
				\node[circle,fill=black,inner sep=0pt,minimum size=5pt,label={[xshift=0.3cm]$\by'$}] (a) at (0,2) {};
				\node[circle,fill=black,inner sep=0pt,minimum size=5pt,label=above:{$\bz$}] (a) at (-3,0) {};
				\node[circle,fill=black,inner sep=0pt,minimum size=5pt,label=above:{$\bz$}] (a) at (5,0) {};
			\end{scope}
		\end{tikzpicture}}
		\end{center}
		\caption{Relative positions of the involved points in the case $a' > 0$.} \label{fig:case1}
		\end{figure}
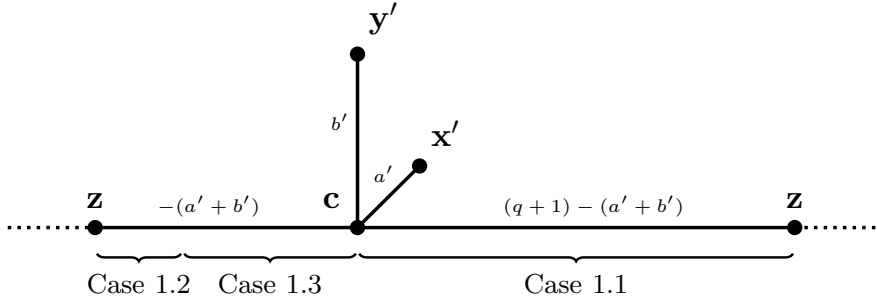
		
		\smallskip \noindent \emph{Case~1.1.} If $1 \leq j < (q+1) - (a' +b')$, then we note that $\bx'$ and $\by'$ form a hyperedge with any of the vertices $\bc + j \, \be_{i_0'}$ as described in Lemma~\ref{lemma:hyperedges_C} and therefore these vertices cannot be red.
		
		\smallskip \noindent \emph{Case~1.2.} If $- (a' +b') < j < -b'$, then  using \ref{item:p-10} with
		\begin{equation}
			a = -a', \quad b  = -j, \quad i_{a } = i_a', \quad i_{b } = i_0' \quad \text{and} \quad \bx  = \bc + j \, \be_{i_0'},
		\end{equation}
		we get that $\bc + (a +b ) \, \be_{i_b'}$ is red.  Note that we could use the inductive hypothesis as $b = - j> b' \geq |a'| = |a|$ and $\max(b ,a +b ) = b = -j < a' +b' = d$. Since $\bx' = \bc + b'\, \be_{i_b'}$ is also red, we have $b' = a+b = - j - a'$ by Corollary~\ref{cor:edges_C}, implying the contradiction $a' + b'=  - j < a' +b'$.
		
		\smallskip \noindent \emph{Case~1.3.} If $-b' \leq j \leq -1$, then using the inductive hypothesis in the form of~\ref{item:p11} with
		\begin{equation}
			a  = j, \quad b  = b', \quad i_{a } = i_0', \quad i_{b } = i_b'\quad \text{and} \quad \bx  = \bc + j \, \be_{i_0'},
		\end{equation}
		we get that $\bc - (a +b ) \, \be_{i_a'}$ is red. Note that we could use the inductive hypothesis as $b = b' \geq - j = |a|$ and $\max(b ,a +b ) = b'< a' + b'= d$. However, since $\bc + a'\,\be_{i_a'}$ is also red, it follows that $a' = - (a+b) = - j  - b'$ by Corollary~\ref{cor:edges_C}, giving the contradiction $a' +b'=  - j  < a' +b'$.
		
		\medskip \noindent {\bf Case~2.} Assume that $a'< 0$. Let us again show that out of the vertices $\bc + j \, \be_{i_0'}$ where $-(a'+b') \leq j \leq q-(a'+b')$ only $\bz$, corresponding to $j =-(a'+b') $, is red. We will do so through another case distinction that is illustrated in Figure~\ref{fig:case2}.
	
		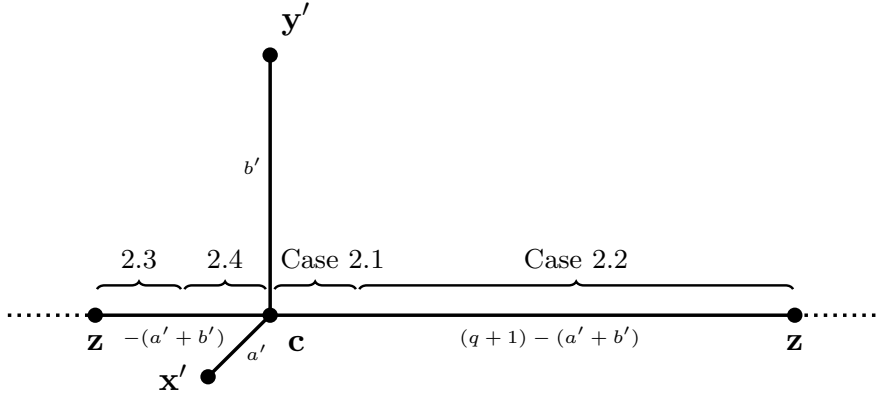
\begin{figure}[h]
		\begin{center}
		\scalebox{1.15}{
		\begin{tikzpicture}
			\begin{scope}
				\draw [thick, decoration={brace, raise=0.3cm }, decorate] (1.02,0) -- (5.98,0) node [pos=0.5,anchor=south,yshift=0.4cm] {\footnotesize Case 2.2};
				\draw [thick, decoration={brace, raise=0.3cm }, decorate] (0.05,0) -- (0.98,0) node [pos=0.5,anchor=south,yshift=0.4cm,xshift=0.2cm] {\footnotesize Case 2.1};
				\draw [thick, decoration={brace, raise=0.3cm }, decorate] (-0.98,0) -- (-0.05,0) node [pos=0.5,anchor=south,yshift=0.4cm] {\footnotesize 2.4};
				\draw [thick, decoration={brace, raise=0.3cm }, decorate] (-1.98,0) -- (-1.02,0) node [pos=0.5,anchor=south,yshift=0.4cm] {\footnotesize 2.3};
				\draw[black, line width=0.4mm] (0,0) -- (-0.71,-0.71);
				\draw[black, line width=0.4mm] (0,0) -- (0,3);
				\draw[black, line width=0.4mm] (0,0) -- (-2,0);
				\draw[black, dotted, line width=0.4mm] (-3,0) -- (-2,0);
				\draw[black, dotted, line width=0.4mm] (6,0) -- (7,0);
				\draw[black, line width=0.4mm] (0,0) -- (6,0);
				\node[inner sep=0pt,minimum size=5pt,label={\tiny $a'$}] (a) at (-0.15,-0.75) {};
				\node[inner sep=0pt,minimum size=5pt,label={\tiny $b'$}] (a) at (-0.2,1.4) {};
				\node[inner sep=0pt,minimum size=5pt,label=below:{\tiny $-(a'+b')$}] (a) at (-1.1,0.1) {};
				\node[inner sep=0pt,minimum size=5pt,label=below:{\tiny $(q+1)-(a'+b')$}] (a) at (3.2,0.1) {};
				\node[circle,fill=black,inner sep=0pt,minimum size=5pt,label={[xshift=0.3cm,yshift=-0.64cm]$\bc$}] (a) at (0,0) {};
				\node[circle,fill=black,inner sep=0pt,minimum size=5pt,label=left:{$\bx'$}] (a) at (-0.71,-0.71) {};
				\node[circle,fill=black,inner sep=0pt,minimum size=5pt,label={[xshift=0.3cm]$\by'$}] (a) at (0,3) {};
				\node[circle,fill=black,inner sep=0pt,minimum size=5pt,label=below:{$\bz$}] (a) at (-2,0) {};
				\node[circle,fill=black,inner sep=0pt,minimum size=5pt,label=below:{$\bz$}] (a) at (6,0) {};
			\end{scope}
		\end{tikzpicture}}
		\end{center}
		\caption{Relative positions of the involved points in the case $a' < 0$.} \label{fig:case2}
		\end{figure}
		
		\smallskip \noindent \emph{Case~2.1.} If $1 \leq j \leq |a'|-1$, then using \ref{item:p-10} with 
		\begin{equation}
			a  = -j, \quad b  = |a'|, \quad i_{a } = i_0', \quad i_{b } = i_a'\quad \text{and} \quad \bx  = \bx',
		\end{equation}
		we get that $\bc + (a +b ) \, \be_{i_b'}$ is red. Note that we could use the inductive hypothesis as $b = |a'| > j = |a|$ and $\max(b ,a +b ) = |a'| < b'= d$. Since $\bx' = \bc+\,b'\be_{i_b'}$ is also red, we have $b' = (a +b ) = -j + |a'|$, implying the contradiction $a' +b'= -j$.
		
		\smallskip \noindent \emph{Case~2.2.} If $|a'| \leq j < (q+1) - (a'+b)$, then we argue as in case~1.1. That is, we observe that $\bx'$ and $\by'$ form a hyperedge with any of the vertices $\bc + j \, \be_{i_0'}$ as described in Lemma~\ref{lemma:hyperedges_C} and therefore these vertices cannot be red.

		\smallskip \noindent \emph{Case~2.3.} If $-(a' +b') < j \leq -|a'|$, then we proceed as in case~1.2. That is, we apply~\ref{item:p-10} with
		\begin{equation}
			a  = |a'|, \quad b  =  - j, \quad i_{a } = i_a', \quad i_{b } = i_0' \quad \text{and} \quad \bx  = \bc + j \, \be_{i_0'},
		\end{equation}
		to conclude that $\bc + (a +b ) \, \be_{i_b'}$ is red. Note that we could use the inductive hypothesis as $b = - j \geq |a'| = |a|$ as well as $\max(b ,a +b ) = |a'|  - j < |a'| + a'+ b'= b'= d$. It follows that $b' = a+b = |a'| - j$, implying the contradiction $a' +b'= - j < a' +b'$.

		\smallskip \noindent \emph{Case~2.4.} If $\max\big(-|a'|, -(a' +b')\big) < j \leq -1$, then using \ref{item:p-10} with
		\begin{equation}
			a  =  - j, \quad b  = |a'|, \quad i_{a } = i_0', \quad i_{b } = i_a'\quad \text{and} \quad \bx  = \bx',
		\end{equation}
		we get that $\bc + (a +b ) \, \be_{i_b'}$ is red. Note that we could use the inductive hypothesis as $b = |a'| > - j = |a|$ as well as $ \max(b , a +b ) = |a'|  - j < |a'| + a' +b' = b'= d$. Hence $b' = a+b =  - j + |a'|$, implying the contradiction $a' +b'= - j < a' +b'$.

		\smallskip As previously in Case~1, we conclude that $\bz$ must be red. This concludes the proof of Claim~\ref{claim:zred}.
	\renewcommand{\qedsymbol}{$\blacksquare$}
	\end{proof}
	
	Claim~\ref{claim:zred} will now be used to show~\eqref{eq:colour_move} in full generality, so for all $s_0, s_1 \in \ZZ$, when $a+b >0$ and $\max(b, a+b) = d$. For this purpose, let $\bx, i_a, i_b$ and $i_0$ be as stated in the lemma and recall that
	\begin{equation*} \label{eq:pyramid_scheme}
	    \bp(s_0,s_1) = \bx + s_0 \, \big( -(a+b) \,\be_{i_0} - b \,\be_{i_b} \big) + s_1 \, \big( -a \,\be_{i_a} + b \,\be_{i_b} \big).
	\end{equation*}
	Again, we will say that $\bp(0,0) = \bx$ and  $\bp(0,1) = \bx - a \,\be_{i_a} + b\,\be_{i_b}$ are red. As previously in the proof of the Claim~\ref{claim:zred}, we have to distinguish two cases for $a$.
		
	\begin{figure}[h]
	\begin{center}
	\scalebox{1.3}{
	\begin{tikzpicture}
		\begin{scope}
			\coordinate (A) at (-0.5,0.3);
			\coordinate (B) at (1,0.5);
			\coordinate (AB) at (0,-1.8);
			\coordinate (C1) at (0,0);
			\coordinate (C2) at ($(C1)-(A)+(B)$);
			\coordinate (C3) at ($(C1)-3*(A)+3*(B)$);
			\coordinate (COMP) at ($(C3)+2*(AB)-3*(A)+2.1*(B)$);
			\draw[black, ->, line width=0.2mm] (COMP) -- ($(COMP)-0.5*(AB)$);
			\draw[black, ->, line width=0.2mm] (COMP) -- ($(COMP)+0.5*(A)$);
			\draw[black, ->, line width=0.2mm] (COMP) -- ($(COMP)+0.5*(B)$);
			\node[label={[xshift=-5pt,yshift=-7.3pt] \tiny $\miniplus a$}] (char) at ($(COMP)+0.5*(A)$) {};
			\node[label={[xshift=6pt,yshift=-10pt] \tiny $\miniplus b$}] (char) at ($(COMP)+0.5*(B)$) {};
			\node[label={[xshift=-1pt,yshift=-5.5pt] \tiny $\miniplus (a\miniplus b)$}] (char) at ($(COMP)-0.5*(AB)$) {};
			\draw[black!60, line width=0.3mm] ($(C1)+(AB)+(A)$) -- ($(C1)+(AB)$);
			\draw[black!60, line width=0.3mm] ($(C1)+(AB)+(A)$) -- ($(C1)+(AB)+(A)-(B)$);
			\draw[black!60, line width=0.3mm] ($(C1)+(AB)+(A)$) -- ($(C1)+(A)$);
			\draw[black!60, line width=0.3mm] ($(C3)+(AB)+(B)$) -- ($(C3)+(AB)$);
			\draw[black!60, line width=0.3mm] ($(C3)+(AB)+(B)$) -- ($(C3)+(AB)+(B)-(A)$);
			\draw[black!60, line width=0.3mm] ($(C3)+(AB)+(B)$) -- ($(C3)+(B)$);
			\draw[black, line width=0.3mm] (C1) -- ($(C1)+(AB)$);
			\draw[black, line width=0.3mm] (C1) -- ($(C1)+(A)$);
			\draw[black, line width=0.3mm] (C1) -- ($(C1)+(B)$);
			\draw[black, line width=0.3mm] (C2) -- ($(C2)+(AB)$);
			\draw[black, line width=0.3mm] (C2) -- ($(C2)+(A)$);
			\draw[black, line width=0.3mm] (C2) -- ($(C2)+(B)$);
			\draw[black, line width=0.3mm] (C3) -- ($(C3)+(AB)$);
			\draw[black, line width=0.3mm] (C3) -- ($(C3)+(A)$);
			\draw[black, line width=0.3mm] (C3) -- ($(C3)+(B)$);
			\draw[black, dotted, line width=0.2mm] ($(C1)+(AB)-(A)$) -- ($(C1)+(AB)-(A)+0.33*(AB)$);
			\draw[black, dotted, line width=0.2mm] ($(C1)+(AB)-(A)$) -- ($(C1)+(AB)-(A)+(A)$);
			\draw[black, dotted, line width=0.2mm] ($(C1)+(AB)-(A)$) -- ($(C1)+(AB)-(A)+(B)$);
			\draw[black, dotted, line width=0.2mm] ($(C1)+(AB)-(B)$) -- ($(C1)+(AB)-(B)+0.33*(AB)$);
			\draw[black, dotted, line width=0.2mm] ($(C1)+(AB)-(B)$) -- ($(C1)+(AB)-(B)+(A)$);
			\draw[black, dotted, line width=0.2mm] ($(C1)+(AB)-(B)$) -- ($(C1)+(AB)-(B)+(B)$);
			\draw[black, dotted, line width=0.2mm] ($(C3)+(AB)-(A)$) -- ($(C3)+(AB)-(A)+0.33*(AB)$);
			\draw[black, dotted, line width=0.2mm] ($(C3)+(AB)-(A)$) -- ($(C3)+(AB)-(A)+(A)$);
			\draw[black, dotted, line width=0.2mm] ($(C3)+(AB)-(A)$) -- ($(C3)+(AB)-(A)+(B)$);
			\draw[black!60, dotted, line width=0.2mm] ($(C1)+(AB)+(A)-(B)$) -- ($(C1)+(AB)+(A)-(B)+0.33*(AB)$);
			\draw[black!60, dotted, line width=0.2mm] ($(C3)+(AB)+(B)-(A)$) -- ($(C3)+(AB)+(B)-(A)+0.33*(AB)$);
			\draw[black!60, dotted, line width=0.2mm] ($(C1)+(A)$) -- ($(C1)+(A)+(B)$);
			\draw[black!60, dotted, line width=0.2mm] ($(C1)+(B)$) -- ($(C1)+(A)+(B)$);
			\draw[black!60, dotted, line width=0.2mm] ($(C3)+(A)$) -- ($(C3)+(A)+(B)$);
			\draw[black!60, dotted, line width=0.2mm] ($(C3)+(B)$) -- ($(C3)+(A)+(B)$);
			\draw[black!60, dotted, line width=0.2mm] ($(C1)+(A)+(B)$) -- ($(C1)+(A)+(B)-0.33*(AB)$);
			\draw[black!60, dotted, line width=0.2mm] ($(C3)+(A)+(B)$) -- ($(C3)+(A)+(B)-0.33*(AB)$);
			\draw[black, dotted, line width=0.2mm] ($(C1)+(B)$) -- ($(C1)+(B)-0.33*(AB)$);
			\draw[black, dotted, line width=0.2mm] ($(C2)+(B)$) -- ($(C2)+(B)-0.33*(AB)$);
			\draw[black, dotted, line width=0.2mm] ($(C3)+(A)$) -- ($(C3)+(A)-0.33*(AB)$);
			\draw[black, dotted, line width=0.2mm] ($(C2)+(B)$) -- ($(C2)+(B)-0.5*(A)$);
			\draw[black, dotted, line width=0.2mm] ($(C3)+(A)$) -- ($(C3)+(A)-0.3*(B)$);
			\draw[black, dotted, line width=0.2mm] ($(C2)+(AB)$) -- ($(C2)+(AB)-(A)$);
			\draw[black, dotted, line width=0.2mm] ($(C2)+(AB)-(A)$) -- ($(C2)+(AB)-(A)+0.8*(B)$);
			\draw[black, dotted, line width=0.2mm] ($(C2)+(AB)-(A)$) -- ($(C2)+(AB)-(A)+0.33*(AB)$);
			\draw[black, dotted, line width=0.2mm] ($(C3)+(AB)$) -- ($(C3)+(AB)-0.8*(B)$);
			\node[circle,fill=black,inner sep=0pt,minimum size=4pt,label=left:{\tiny $\bp(s_0\miniminus 1,\!0)$}] (char) at ($(C1)+(A)$) {\color{white} \tiny };
			\node[circle,fill=black,inner sep=0pt,minimum size=4pt,label={[xshift=17pt,yshift=-3pt] \tiny $\bp(s_0\miniminus 1,\!1)$}] (char) at ($(C2)+(A)$) {\color{white} \tiny };
			\node[circle,fill=black,inner sep=0pt,minimum size=4pt,label=above:{\tiny }] (char) at ($(C2)+(B)$) {\color{white} \tiny };
			\node[circle,fill=black,inner sep=0pt,minimum size=4pt,label=above:{\tiny }] (char) at ($(C3)+(A)$) {\color{white} \tiny };
			\node[circle,fill=black,inner sep=0pt,minimum size=4pt,label=right:{\tiny $\bp(s_0\miniminus 1,\!s_0)$}] (char) at ($(C3)+(B)$) {\color{white} \tiny };
			\node[circle,fill=black,inner sep=2pt,minimum size=0pt,label={[xshift=16pt,yshift=-3pt]\tiny $\bp(s_0,\!1)$}] (char) at ($(C1)+(AB)$) {\color{white} \scriptsize \bf 1};
			\node[circle,fill=black,inner sep=2pt,minimum size=0pt,label={[xshift=16pt,yshift=-3pt]\tiny }] (char) at ($(C2)+(AB)$) {\color{white} \scriptsize \bf 1};
			\node[circle,fill=black,inner sep=2pt,minimum size=0pt,label={[xshift=-18pt,yshift=-4pt]\tiny $\bp(s_0,\!s_0)$}] (char) at ($(C3)+(AB)$) {\color{white} \scriptsize \bf 1};
			\node[circle,fill=black,inner sep=2pt,minimum size=0pt,label=left:{\color{black} \tiny $\bp(s_0,\!0)$}] (char) at ($(C1)+(AB)+(A)-(B)$) {\color{white} \scriptsize \bf 2};
			\node[circle,fill=black,inner sep=2pt,minimum size=0pt,label=right:{\color{black} \tiny $\bp(s_0,\!s_0\miniplus 1)$}] (char) at ($(C3)+(AB)+(B)-(A)$) {\color{white} \scriptsize \bf 3};
		\end{scope}
	\end{tikzpicture}}
	\end{center}
	\caption{The inductive step over $s_0$ in the case $a > 0$.} \label{fig:caseI}
	\end{figure}
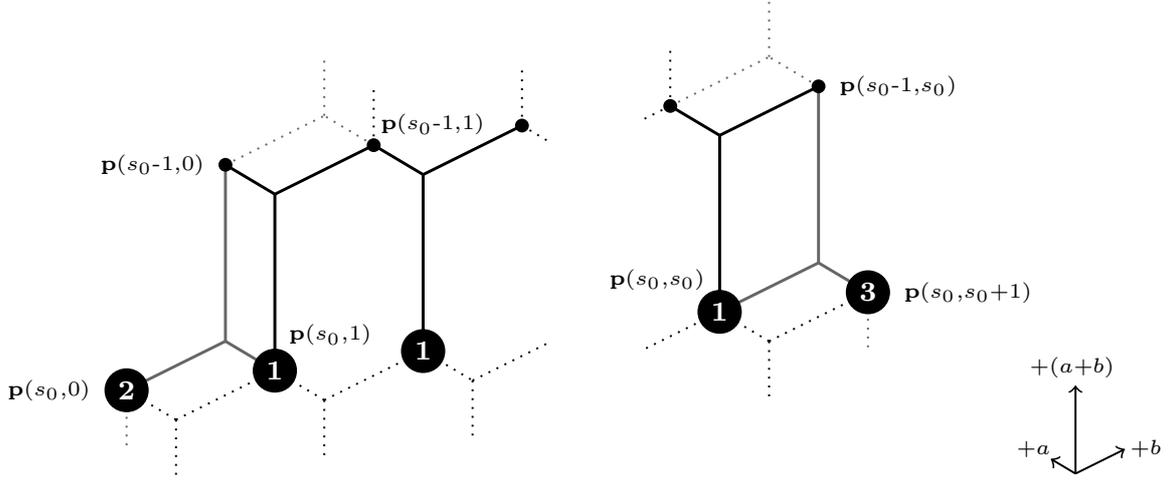
		
	\smallskip \noindent {\bf Case~I.} If $a > 0$, then we start by showing that $ \bp(s_0,s_1)$ is red for any $s_0,s_1 \in \NN_0$ satisfying $s_1 \leq s_0 + 1$ by induction on $s_0$. We note that for $s_0 = 0$ the statement holds as $\bp(0,0) = \bx$ and $\bp(0,1) = \bx - a \,\be_{i_a} + b\,\be_{i_b}$. Assume therefore that the statement holds for $s_0 - 1$ and let us show that it holds for all $0 \leq s_1 \leq s_0+1$. We will do so through another case distinction that is illustrated in Figure~\ref{fig:caseI}:
	\begin{itemize} \setlength\itemsep{0em}
		\item[1.]	 We start by proving it for $0 < s_1 < s_0 + 1$. By inductive assumption, $\bp(s_0-1,s_1-1)$ and $\bp(s_0-1,s_1)$ are red so that we can apply Claim~\ref{claim:zred} with
			\begin{equation}
				a' = a, \quad b' = b, \quad i_a' = i_a, \quad i_b' = i_b\quad \text{and} \quad \bx' = \bp(s_0-1, s_1-1),
			\end{equation}
			to deduce that $\bp(s_0, s_1)$ is also red.
		\item[2.]	Secondly, let $s_1 = 0$. Since $\bp(s_0-1, 0)$ and $\bp(s_0, 1)$ are red, we apply Claim~\ref{claim:zred} with
			\begin{equation}
				a' = -a, \quad b' = a+b, \quad i_a' = i_a, \quad i_b' = i_0 \quad \text{and} \quad \bx' = \bp(s_0, 1),
			\end{equation}
			to deduce that $\bp(s_0, 0)$ is also red.
		\item[3.]	Finally, consider $s_1 = s_0+1$. We use the fact that $\bp(s_0, s_0)$ and $\bp(s_0-1, s_0)$ are red and apply Claim~\ref{claim:zred} with
			\begin{equation}
				a' = -b, \quad b' = a+b, \quad i_a' = i_b, \quad i_b' = i_0 \quad \text{and} \quad \bx' = \bp(s_0, s_0),
			\end{equation}
			to deduce that $\bp(s_0, s_0+1)$ is also red.
	\end{itemize}
	This concludes the inductive step, so we have shown that~\eqref{eq:colour_move} holds whenever $s_0 \in \NN_0$ and $0 \leq s_1 \leq s_0 + 1$. However, since the summation is done modulo $(q+1)$, we have for instance $\bp(s_0, s_1) = \bp(s_0-(q+1), s_1)$. It immediately follows that $\bp(s_0,s_1)$ is red for any $s_1, s_0 \in \ZZ$.
	
	\smallskip \noindent {\bf Case~II.} If $a < 0$, then we note that, by Claim~\ref{claim:zred}, $\bz = \bx - a\,\be_{i_a} - (a+b)\,\be_{i_0}$ is red. If $a+b \geq |a|$, then we rewrite that last equation as $\bx = \bz - |a| \, e_{i_a} + (a+b) \, \be_{i_0}$ and observe that the position of $\bx$ in relation to $\bz$ satisfies the conditions of Case~I. If $a+b \leq |a|$ then we note that $\bz = \bx - (a+b)\,\be_{i_0} + |a|\,\be_{i_a}$ so that now $\bz$ in relation to $\bx$ satisfies the conditions of Case~I. In either scenario we can immediately derive~\eqref{eq:colour_move}.

    \smallskip This completes the inductive step over $d$ for $a+b > 0$. The remaining case is $-a = b = d$, so let $\bx$ and $\by = \bx + b\, \be_{i_a} + b\, \be_{i_b}$ be red. By Corollary~\ref{cor:edges_C}, the vertex $\bx + b\, \be_{i_a}$ is connected to both $\bx$ and $\by$ by an edge and therefore cannot be red. The vertices $\bx + b\, \be_{i_a} + j \, \be_{i_0}$ where $|a| = b \leq j \leq q$ form a hyperedge with $\bx$ and $\by$ as described in Lemma~\ref{lemma:hyperedges_C} and therefore these vertices also cannot be red. Now assume that $\bx + b\,\be_{i_a} + j_0 \, \be_{i_0}$ is red for some $1 \leq j_0 \leq b-1$. We apply~\eqref{eq:colour_move} with $-j_0$ in place of $a$ and the same value of $b$. Note that $|-j_0| = j_0 < b$ as well as $ \max(b, -j_0+b) = b = d$ and $-j_0 \neq b$. We previously established that~\eqref{eq:colour_move} holds in this case, so using observation~\ref{item:p-10} we get that $\bx +b \,\be_{i_a} + (-j_0 + b) \, \be_{i_b}$ must be red. Since $\by = \bx +b \,\be_{i_a} + b \, \be_{i_b}$ is also red, we must have $b - j_0 = b$ in contradiction to $j_0 \geq 1$. We have shown that the vertices $\bc + j \, \be_{i_0}$ cannot be red for any $0 \leq j \leq q$, contradicting Corollary~\ref{cor:edges_C}. It follows that~\eqref{eq:colour_move} vacuously holds for the case $-a = b = d$, completing the inductive step over $d$ and proving Lemma~\ref{lemma:ind}.
\end{proof}

Let us now derive Proposition~\ref{prop:main} from Lemma~\ref{lemma:ind}.
\begin{proof}[Proof of Proposition~\ref{prop:main}]
    By Corollary~\ref{cor:edges_C}, one of the vertices $\{ j \, \be_2: 0 \leq j \leq q\}$ must have the same colour as $-\be_1$, say $\chi(j_0 \,\be_2) = \chi (-\be_1)$. If $j_0 = 0$, then we get an immediate contradiction. We claim that
    \begin{equation} \label{eq:g1}
	    1 = \gcd(j_0,q+1) = \gcd(j_0-1, q+1).
    \end{equation}
    From this one would immediately derive a contradiction since $q+1$ is even by assumption and at least one of $j_0$ and $j_0-1$ must be even as well.
    
    To see that~\eqref{eq:g1} holds, we apply Lemma~\ref{lemma:ind} with
	\begin{equation}
		a = -1, \quad b = j_0, \quad i_a = 1, \quad i_b = 2 \quad \text{and} \quad \bx = -\be_1.
	\end{equation}
	Let $\bp(s_0, s_1)$ be defined as in the previous proof so that Lemma~\ref{lemma:ind} established that all $\bp(s_0,s_1)$ have the same colour as $\be_1$ and $b \, \be_2$ for any $s_0, s_1 \in \ZZ$. Let $A_a, A_b, A_0$ respectively denote the set of their projections onto the axes $i_a, i_b, i_0$. We note that
    \begin{align*}
    	|A_a| & = (q+1)/\gcd(1,q+1) = 1, \\
    	|A_b| & = (q+1)/\gcd(j_0,q+1), \\
    	|A_0| & = (q-1)/\gcd(j_0-1,q+1).
    \end{align*}
	However, in order not to violate the latin cube structure described by Corollary~\ref{cor:edges_C}, we must have $|A_a| = |A_b| = |A_0|$. This establishes~\eqref{eq:g1} and therefore concludes the proof of Proposition~\ref{prop:main}.
\end{proof}
Theorem~\ref{thm:main} now follows as an immediate consequence of Proposition~\ref{prop:main}, Corollary~\ref{cor:PtoC} and Corollary~\ref{cor:HtoP}, where we set the $N_{\footnotesize\ref{thm:main}} = N_{\footnotesize\ref{thm:main}}(r)$ of Theorem~\ref{thm:main} equal to $N_{\footnotesize\ref{cor:HtoP}}(7k_0 + 4 + q) = N_{\footnotesize\ref{cor:HtoP}}(70q + 42 + q)$ from Corollary~\ref{cor:HtoP} and where of course $q = r-1$.\qed

\section{Remarks and Open Questions} \label{sec:remarks}

\subsection{Bounds for larger alphabets}

Based on their result, Conlon and the first author originally conjectured that $\mI(m,r) = H\!J(m-1,r)$. Given the result of Leader and R\"aty, this conjecture was later retracted, though it is still reasonable to wonder if $\mI(m,r)$ is at all linked to $H\!J(m-1,r)$ when $m > 3$.
\begin{question}
	Improve on either of the immediate bounds
		\begin{equation}
				\max \left\{ \mI(m-1,r), \, \mI(m,r-1) \right\} \leq \mI(m,r) \leq H\!J(m-1,r).
		\end{equation}
\end{question}
In particular, $r$ is the best explicit lower bound that the present methods can give on $\mI(m, r)$, which is probably far from the truth. 

An upper bound separating $\mI(m, r)$ from $H\!J(m-1, r)$ would entail a new argument for the Hales-Jewett theorem. For instance, the reduction to the hypergraph $\bP(3,n, r)$ conceived by Leader and R\"aty, along with the observation that $\bP(3, n, r)$ contains an $r$-clique, already gives an alternative proof for an alphabet of size $3$. For $m \geq 3$, this strategy essentially reduces the alphabet size by one and therefore may be a good starting point for further inquiries.

\subsection{Bounds for the Hales--Jewett number}

There is currently a large gap in the best bounds on $H\!J(3, r)$, see ~\cite{Sh88,Be68,HT14,La16,BCT18,Co18}. More specifically, we have
\begin{equation}
	r^{c \ln(r)} \leq H\!J(3, r) \leq 2^{2^{cr}}
\end{equation}
for some $c > 0$, where the upper bound is due to Conlon~\cite{Co18} and the lower bound follows from a lower bound of the van der Waerden number $W(3,r)$ due to Graham, Rothschild and~Spencer~\cite{GRS90}. With this in mind, we ask the following question.
\begin{question}
	Given $r \geq 2$ and $q \geq 2 \ceil{r/2}-1$, what $N = N(r,q)$ guarantees the existence of $q$--fold lines in any $r$--colouring of $[3]^N$?
\end{question}
A better lower bound on $N(r, q)$ may turn out to be more accessible than one for $H\!J(3, r)$.

\subsection{The cyclic setting}

It should also be noted that the setup become significantly more natural if we take the cycle ground set for the coordinates, that is $\ZZ_n$ rather than $[n]$, so that we might have an interval at the border that ‘wraps’ around. In particular, the step to the reduced count in Subsection~\ref{subsec:reducedcount} becomes easier while actually resulting in a stronger statement that avoids the need for the “buffer”. We therefore highly recommend that anyone interested in further exploring this topic use that setup instead of the one employed so far in this and the previous papers. Note that the cyclic setting is for example also used in~\cite{NSV18}, where a supersaturation-type Hales--Jewett statement is obtained for combinatorial lines whose set of active coordinates is contained in some small interval.

\bigskip

\noindent {\bf Acknowledgements. }We would like to thank David Conlon and Oriol Serra for helpful discussions and comments on the topic. The first author would like to thank UPC for their hospitality during her visit to Barcelona. Part of the research was conducted while the first author was at the Department of Mathematics of ETH Z\"urich.

\bibliography{bib}
\bibliographystyle{abbrv}

\end{document}